\documentclass[11pt, reqno]{amsart}

\usepackage{tikz}
\usepackage{tikz-cd}

\newtheorem{theorem}{Theorem}[subsection]
\newtheorem{lemma}[theorem]{Lemma}

\newtheorem{definition}[theorem]{Definition}

\newlength{\margins}
\setlength{\margins}{1.5in}
\usepackage[top=\margins,bottom=\margins,left=\margins,right=\margins]{geometry}
\usepackage{amsmath}
\numberwithin{equation}{section}

\DeclareMathOperator{\mfm}{\mathfrak{m}}
\DeclareMathOperator{\mfp}{\mathfrak{p}}

\DeclareMathOperator{\J}{\mathcal{J}}

\DeclareMathOperator{\C}{\mathbb{C}}

\DeclareMathOperator{\F}{\mathbb{F}}

\usepackage{amscd, mathrsfs, dsfont, mathdots}
\usepackage{amsmath,amssymb,amsthm,mathscinet}
\usepackage{marginnote}
\usepackage{enumerate}
\usepackage{easybmat,graphics}
\usepackage{etex, tikz, epic}
\usepackage{hyperref}
\hypersetup{colorlinks=true,citecolor=black,linkcolor=black}
\usepackage{etex}
\usepackage[all]{xy}
\usepackage[mathscr]{euscript}
\usepackage{pictexwd, dcpic}

\begin{document}

\title[${\rm SL}_*$ over local and ad\`ele rings]{${\rm SL}_*$ over local and ad\`ele rings: \\ $*$-euclideanity and \\ Bruhat generators}

\thanks{2020 \emph{Mathematics Subject Classification}. Primary 20G35, 20F05, 16L30}

\author[L. Guti\'errez]{Luis Guti\'errez Frez}

\author[L. Lomel\'i]{Luis Lomel\'i}

\author[J. Pantoja]{Jos\'e Pantoja${}^*$}

\keywords{${\rm SL}_*$ groups; $*$-Euclidean rings; Bruhat generators}

\date{February 2021}

\begin{abstract}
Let $(R,*)$ be a ring with involution and let $A = {\rm M}(n,R)$ be the matrix ring endowed with the $*$-transpose involution. We study ${\rm SL}_*(2,A)$ and the question of Bruhat generation over commutative and non-commutative local and ad\`elic rings $R$. An important tool is the property of a ring being $*$-Euclidean. In this regard, we introduce the notion of a $*$-local ring $R$, prove that $A$ is $*$-Euclidean and explore reduction modulo the Jacobson radical for such rings. Globally, we provide an affirmative answer to the question wether a commutative ad\`elic ring $R$ leads towards the ring $A$ being $*$-Euclidean; while the non-commutative ad\`elic quaternions are such that $A$ is $*$-Euclidean and ${\rm SL}_*$ is generated by its Bruhat elements if and only if the characteristic is $2$.
\end{abstract}

\maketitle

\let\thefootnote\relax\footnotetext{(*) The authors were partially supported by Fondecyt Grant 1171583.}

\section*{Introduction}

We study ${\rm SL}_*$ groups, introduced by Pantoja and Soto-Andrade \cite{PaSA2003}, over local and ad\`ele rings with involution, where we work over commutative and non-commutative rings with identity. At the base of the algebraic properties of ${\rm SL}_*$ groups are its Bruhat elements and the question of Bruhat generation. In \cite{PaSA2009}, the notion of a $*$-Euclidean ring is introduced, which provides a powerfult tool that allows us to infer that ${\rm SL}_*(2,A)$ is generated by its Bruhat elements when $A$ is such a ring with involution.

The non-commutative $*$-analogue of a special linear group, poses many interesting questions that are in accordance with the classical theory of Weil representations, Bruhat presentations and the Langlands program. In this article, we study for the first time the questions of $*$-Euclideanity and Bruhat generation for ad\`ele rings, including the quaternions in characteristic zero and charactersitic $p$. We develop general machinery along the way, where we are in many places guided by global questions posed in the ad\`elic setting. For example, we make a careful study of the local question of reduction modulo $\mathfrak{p}$, in fact, modulo the Jacobson radical. It is curious that we quickly arrive at the notion of a \emph{$*$-local ring}, for which we introduce a proper set of hypothesis in order to characterize a ring with a unique ideal that is stable under the involution and is maximal with this property.

Many examples arise by considering the ring of matrices $A = {\rm M}(n,R)$, when $R$ is a $*$-local ring or an ad\`ele ring with involution. The ring $A$ is itself endowed with the $*$-transpose involution; the involution of $A$ induced from that of $R$. We prove that $A$ is $*$-Euclidean for a list of examples of involutive rings $(R,*)$ provided in Theorem~\ref{thm:loc:glob:Bruhat}:
\begin{itemize}
   \item[(i)] $R$ a $*$-local ring.
   \item[(ii)] $R$ the ring of ad\`eles $\mathbb{A}_F$ of a global field $F$, with trivial involution.
   \item[(iii)] $R = \mathbb{A}_E$, where $E/F$ is a separable quadratic field extension with the involution dictated by the non-trivial Galois automorphism.
   \item[(iv)] $R = \mathbb{A}_{\mathbb{H}}$, where $\mathbb{H}$ is a quaternion algebra over a global field $F$, ${\rm char}(F) = 2$.
\end{itemize}
In all of these cases, ${\rm SL}_*(2,A)$ is generated by its Bruhat elements. It is noteworthy to observe in case (iv), of a quaternion algebra over a global field $F$, that the ring ${\rm M}(n,\mathbb{A}_{\mathbb{H}})$ is $*$-Euclidean and is generated by its Bruhat elements if and only if ${\rm char}(F) = 2$.

We now give a more detailed account of the contents of this article. The ${\rm SL}_*$ functor of Pantoja and Soto-Andrade for not necessarily commutative rings with involution and basic properties are reviewed in \S~\ref{SL*:prelim}; the main references being \cite{PaSA2003, PaSA2009}. In particular, in \S~\ref{*:Euc} we recall the notion of a $*$-Euclidean ring.

We arrive at the notion of a $*$-local ring $R$ in \S~\ref{q-local:inv}, where we quickly characterize these rings in the basic Lemma~\ref{one:two:qloc}. They are $1$- or $2$-local rings with involution. We refer to the former simply as a local ring with involution, where the Jacobson radical is the unique maximal prime ideal $\mathcal{J} = \mfp$. While the latter case has two distinct maximal prime ideals $\mfp$ and $\mfp^*$, its Jacobson radical being $\mathcal{J} = \mfp \cap \mfp^*$. We then proceed to study symmetric and invertible elements for the matrix ring $A = {\rm M}(n,R)$, endowed with the involution induced from the $*$-local ring $R$, and their behavior under reduction modulo the Jacobson radical $\mathcal{J}$ of $A$. More precisely, we treat both cases of $1$- and $2$-local rings with involution in a single swoop, study the projection map $\pi: A \xrightarrow{\ \pi \ } \overline{A} = A/\mathcal{J}$ and produce a section map $\sigma$ that is compatible with symmetry and invertibility in Lemma~\ref{proj:sec:q-loc:lem}.

We prove that the matrix ring $A = {\rm M}(n,R)$ over a $*$-local ring is $*$-Euclidean in Theorem~\ref{thm:q-local}; a result that Soto-Andrade proved when $R$ is a field \cite{SA1978}, and, for a division ring $R$, it is part of the ``Co-prime Lemma'' of Pantoja and Soto-Andrade \cite{PaSA2003}. For this, we show in Lemma~\ref{lem:local:A/J:A} that if $\overline{A} = A/\mathcal{J}$ is $*$-Euclidean, then so is $A$. And, in Lemma~\ref{2:div:rings} we establish $*$-Euclideanity for the new case of $R = D_1 \times D_2$, with $D_1$ and $D_2$ division rings related by an anti-automorphism $\varphi: D_1 \rightarrow D_2$ and the $\varphi$-flip transpose involution on $A = {\rm M}(n,R)$.

The ad\`eles over a commutative global field are studied in \S~\ref{adeles:def}, where $A={\rm M}(n,\mathbb{A}_F)$ is proved to be $*$-Euclidean in Theorem~\ref{thm:adeles:F}. A quadratic global field extension $E/F$, which we address in \S~\ref{quad:ext}, leads to $A={\rm M}(n,\mathbb{A}_E)$ being $*$-Euclidean for the involution induced from the non-trivial Galois element of ${\rm Gal}(E/F)$, Theorem~\ref{thm:adeles:quad}. Locally, we require Lemmas \ref{lem:local:A/J:A} and \ref{2:div:rings} for the semi-local ring $R = \mathcal{O}_F \times \mathcal{O}_F$ and a finite residue ring $R/\mathcal{J} = k_F \times k_F$, respectively, each of them with the flip involution.

The $*$-Euclidean property for the quaternions $\mathbb{H}$ over a local or a global field $F$, is studied in \S~\ref{quat}. The general basic theory is expounded in \cite{WeilNT} for central simple algebras in a manner that is independent of the characteristic, and is detailed in \cite{Vi} for $\mathbb{H}$. We here encounter fundamental differences depending on the characteristic being $2$ or not, see Theorem~\ref{thm:adeles:H}. Locally, we may have non-split or split quaternions at a place $v$ of $F$. The finitely many non-split cases $\mathbb{H}_v$ are division algebras over a local field $F_v$, while at the remaining infinitely many split places we have that $\mathbb{H}_v$ are matrix quaternions. The case of a finite place $v$ of $F$, leads to a non-Archimedean local field $F_v$ with ring of integers $\mathcal{O}_v$. And we also have quaternioninc rings $\mathcal{Q}_v$ at the non-Archimedean places, which can also be split or non-split.

Before continuing to inspect the global quaternions, we need a couple of local lemmas that we prove in the slightly more general settings of quaternion matrix rings and quaternion division algebras that exhibit the main difference of the characteristic being different than $2$ or not, Lemmas \ref{lem:n2:n*E} and \ref{H:char2}, respectively. The main $*$-Euclideanity result for the ad\`elic quaternions is Theorem~\ref{thm:adeles:H}, whose proof also requires two general local lemmas that are of independent interest for local split matrix rings of quaternions and for division algebras, namely Lemmas \ref{lem:char2:split} and \ref{lem:char2:split:localring}. In short, the ring of matrices $A = {\rm M}(n,\mathbb{A}_{\mathbb{H}})$ over the quaternion ad\`ele ring is $*$-Euclidean if and only if ${\rm char}(\mathbb{H}) = 2$.

In the final section, we begin with Dieudonn\'e and his non-conmutative determinant, and we extend a basic criterion for invertibility to general linear groups over $*$-local rings. The case of a commutative $2$-local ring with involution naturally arises while studying unitary groups at a split place of a quadratic extension of global fields, where there is a known connection to general linear groups. We review the connection between ${\rm SL}_*$ groups and the even unitary groups in \S~\ref{unitary:SL*}.

It is in \S~\ref{bruhat:gen} where we arrive at our main application of the $*$-Euclidean property, that is, ${\rm SL}_*$ groups and Bruhat generation for a list of examples over local and ad\`ele rings, which includes unitary groups and ${\rm SL}_*$ over quaternion rings. We recall the definition of the Bruhat elements, provide basic properties and state the main application in the form of Theorem~\ref{thm:loc:glob:Bruhat}. Having done much of the work for local and ad\`ele rings, a large part of the theorem follows from the results of \S~\ref{Eu:Br:Ad}, once we incorporate the result of Pantoja and Soto-Andrade that ${\rm SL}_*(2,A)$ has a set of Bruhat generators when $A$ is $*$-Euclidean \cite{PaSA2009}. However, special care must be taken in the case of the quaternions.

The quaternions over local and ad\`ele rings, their connection to ${\rm SL}_*$ groups and the question of Bruhat generation are scrutinized in \S~\ref{sl:quat}. In addition to completing the proof of Theorem~\ref{thm:loc:glob:Bruhat}, we record a couple of interesting quaternionic facts along the way. For example, a quaternion algebra $\mathbb{H}$ over a non-Archimedean local field $F$ leads towards special, Dieudonn\'e special and $*$-analogue special linear groups: ${\rm SL}(2,F)$, ${\rm SL}(2,\mathbb{H})$ and  ${\rm SL}_*(2,\mathbb{H})$. All three groups are distinct, and the first and the third are related by
\begin{equation*}
   {\rm SL}_*(2,\mathbb{H}) = D_{\mathbb{H}} \cdot {\rm SL}(2,F),
\end {equation*}
where $D_{\mathbb{H}}$ is the subgroup of ${\rm SL}_*(2,\mathbb{H})$ consisting of diagonal elements. Furthermore, it is curious to observe that each of ${\rm SL}(2,F)$ and ${\rm SL}(2,\mathbb{H})$, by Ihara's theoerem, has a further decomposition as an amalgamated product involving the corresponding congruence subgroup over the ring of integers and Iwahori subgroup; see \cite{Se}, for example.

We conclude by refining our theorem on the algebra $A = {\rm M}(n,\mathbb{A}_{\mathbb{H}})$, obtaining in this case that ${\rm SL}_*(2,A)$ has Bruhat generation precisely when ${\rm char}(\mathbb{H}) = 2$. This is done by incorporating a result of \cite{CrGuSz2020} on Bruhat generation for finite fields that generalizes to the setting of a local non-Archimedean split quaternionic algebra ring, together with the results of \S~\ref{quat}.

\section{Preliminaries on ${\rm SL}_*$}\label{SL*:prelim}

Let $R$ be a ring with $1$, endowed with an anti-automorphism $\alpha: R \rightarrow R$, $r \mapsto r^*$, with $\alpha$ either trivial or of order $2$. Notice that $\alpha = {\rm id}$ is only possible when $R$ is commutative. In general, $R$ is a not necessarily commutative ring with involution.

\subsection{${\rm SL}_*$ groups}

Let $\mathcal{A}_R$ be the category of involutive rings with identity $(A,*)$, such that $R \subset A$ and the involution of $A$ is compatible with that of the involutive ring $R$. Our main examples are obtained by taking the ring of $n\times n$ matrices $A={\rm M}(n,R)$ with entries in $R$. In this case, the involution is given by
\begin{equation}\label{nxn:invo}
   a^* = (a_{ji}^*), \text{ for } a = (a_{ij}) \in {\rm M}(n,R).
\end{equation}

Let $(A,*) \in \mathcal{A}_R$. We denote the center of $A$ by $Z_A$ and its group of invertible elements by $A^{\times}$. We let
\[ A^{\text{sym}} = \{ a \in A \mid a^* = a \}, \]
called the set of symmetric elements, and let
\[ {Z^{\times\,\text{sym}}_A} = Z_A \cap A^\times \cap A^{\text{sym}}, \]
be the set of central invertible symmetric elements of $A$.

If we let $\mathcal{G}$ denote the category of groups, then we have a functor
\begin{align*}
   \mathcal{SL}:\mathcal{A}_R &\rightarrow \mathcal{G} \\
   (A,*) &\mapsto {\rm SL}_*(2,A),
\end{align*}
originally defined by Pantoja and Soto-Andrade. We next recall several basic properties of ${\rm SL}_*$-groups, proved in \cite{PaSA2003}.

\subsection{General setting}
Let $(A,*) \in \mathcal{A}_R$. The involution on $A$  induces the involution on the ring of matrices ${\rm M}(2,A)$ given by \eqref{nxn:invo}. If we write  
\[ J = \left( \begin{array}{rr} \ 0 & 1 \\ -1& 0 \end{array} \right), \]
then ${\rm GL}_*(2,A)$ denotes the set of invertible matrices in $g\in {\rm M}(2,A)$ such that
 \[ g^{*}Jg = \delta(g) J, \] 
 for some $\delta(g) \in Z^{\times\,\text{sym}}_A$.
 
 The set ${\rm GL}_*(2,A)$ forms a group under matrix multiplication, in fact, $g \in {\rm GL}_*(2,A)$ implies $\delta(g^{-1}) = \delta(g)^{-1}$, $g^* \in {\rm GL}_*(2,A)$ and $\delta(g)=\delta(g^*)$. Furthermore, we have an epimorphism
\[ {\rm det}_* : {\rm GL}_*(2,A) \rightarrow Z_A^{\times\,{\rm sym}}, \]
given by 
\[
   {\rm det}_*(g) = ad^{*} - bc^{*}, \quad \quad    
   g= \left(\begin{matrix} a & b \\c & d \end{matrix}\right)\in 
   {\rm GL}_*(2, A).
\]

\begin{definition}
 The group  ${\rm SL}_*(2,A)$ is the kernel of the epimorphism $\det_*$.
\end{definition}

Making explicit the conditions on the matrices, we see that ${\rm SL}_*(2,A)$ is the group of matrices 
\begin{equation}\label{g:matrix}
   g=\left(\begin{matrix} a & b \\c & d \end{matrix}\right),
\end{equation}
with $a, b, c, d \in A$, satisfying the following:
\begin{equation}\label{sl*:relations}
   ad^* - bc^* = a^*d - c^*b = 1 \quad \text{and} \quad 
   ab^*, cd^*, a^*c, b^*d \in A^{\rm sym}.
\end{equation}

We observe that ${\rm SL}_*(2,A)$ may often be viewed as the isometry group, (and ${\rm GL}_*(2,A)$ as the similitude group with multiplier $\delta$) of the hermitian form
\[ h(x,y) = x^*Jy, \quad x, y\in A^2. \]
Here, $x \in A^2$ is identified with a column vector, and the involution $*$ of $A$ is extended to a map from column vectors to row vectors, by taking the involution $*$ of $A$ entry-wise and the transpose.

Another observation is that one retrieves the groups ${\rm GL}(2,A)$ and ${\rm SL}(2,A)$ when $A$ is commutative with trivial $*$. 

\subsection{$*$-Euclidean rings}\label{*:Euc}
Let $(A,*)\in \mathcal{A}_R$. We say that $A$ is \emph{$*$-Euclidean} if given $a, c \in A$ such that
\[ Aa + Ac = A, \quad a^*c \in A^{\rm sym}, \]
and setting $a = r_{-1}$, $c = r_0$, then there exist elements $s_0, \ldots, s_{n-1} \in A^{\text{sym}}$, $r_1, \ldots, r_{n-1} \in A$ and $r_n \in A^\times$ such that
\[ r_{i-1} = s_i r_i + r_{i+1}, \]
for $i = 0, \ldots, n-1$. When this is so, we say $n$ is the decomposition length of the pair $(a,c)$. The minimum  length valid for all possible pairs, if it exists, is the length of the $*$-Euclidean ring $A$.

As examples of  $*$-Euclidean rings $A$ we have: the ring of matrices ${\rm M}(n,F)$ with entries in a field \cite{SA1978}; and, ${\rm M}(n,D)$ with entries in a division ring \cite{PaSA2003}. In \S~3 we will prove that ${\rm M}(n,R)$  is $*$-Euclidean, where $R$ is a $1$-local, $2$-local or a commutative ad\`ele ring.

\section{Local rings with involution}\label{q-local:inv}

We say that a ring $R$ with identity is \emph{l-maximal} if every left maximal ideal is an ideal. There is the similar notion of \emph{r-maximal}, involving right maximal ideals.

Assume that $(R,*)$ is a ring with involution, then $R$ is $l$-maximal if and only if it is $r$-maximal. A stable ideal $\mathfrak{a}$ of $R$ is one such that $\mathfrak{a}^* = \mathfrak{a}$.

An $l$-maximal ring with involution $(R,*)$ is Dedekind finite, i.e., a ring where every element with a right inverse also has a left inverse; equivalently, a left invertible element is invertible. To see this, suppose $x \in R$ is right invertible, but not left invertible. Then there exists a left maximal ideal $\mfm$ of $R$ containing $x$. But $x$ being right invertible implies $\mfm R = R$, contradicting the assumption on $R$ that $\mfm$ is also a right ideal.

We observe that a finite direct product of local rings is $l$-maximal. However, the ring  of matrices $A={\rm M}(n,R)$ over a local ring has left maximal ideals but no maximal ideals for $n > 1$; its Jacobson radical is a stable ideal, the unique maximal stable ideal. In this case, we know that $A$ is semilocal \cite{La2001}, hence Dedekind finite.

\subsection{On the notion of a $*$-local ring}

Define a \emph{$*$-local ring} to be an $l$-maximal ring with involution $(R,*)$ having a unique stable maximal ideal.

\begin{lemma}\label{one:two:qloc}
A $*$-local ring $R$ has maximal spectrum
\[ {\rm MSpec}(R) = \left\{ \mfp, \mfp^*\right\} \subset {\rm Spec}(R), \]
and unique stable maximal ideal given by the Jacobson radical
\[ \J = \mfp \cap \mfp^*. \]
Furthermore, $D = R/\mfp$ is a division ring, and we have
\begin{equation*}
   x \in R^\times \ \Longleftrightarrow \ 
   \bar{x} \in \left(R/\mathcal{J}\right)^\times \ \Longleftrightarrow \
   x \notin \mfp, \ x^* \!\notin \mfp.
\end{equation*}
\end{lemma}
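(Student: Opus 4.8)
The plan is to work exclusively with two-sided ideals, exploiting $l$-maximality to identify them with maximal left ideals, then to pin down ${\rm MSpec}(R)$ by a primeness argument and finally to read off the unit criterion. So first I would record the dictionary available in a (necessarily nonzero) $l$-maximal ring $(R,*)$. If $\mfm$ is a maximal left ideal it is two-sided by hypothesis, and it is then maximal among proper two-sided ideals, since a two-sided ideal strictly containing it is in particular a strictly larger left ideal, hence $R$; conversely a two-sided ideal maximal among proper two-sided ideals lies in some maximal left ideal, which is two-sided, so the two agree by maximality. Thus maximal left, maximal right and maximal two-sided ideals all coincide, the Jacobson radical of $R$ is their common intersection, the anti-automorphism $*$ permutes ${\rm MSpec}(R)$ (it preserves two-sidedness and the inclusion order among two-sided ideals), and every $\mfm\in{\rm MSpec}(R)$ is prime: if $\mfa\mfb\subseteq\mfm$ and $\mfa\not\subseteq\mfm$ then $\mfa+\mfm=R$, whence $\mfb=(\mfa+\mfm)\mfb\subseteq\mfm$.

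Next, a Zorn argument: the union of a chain of proper stable ideals is again proper and stable, so every proper stable ideal is contained in a maximal proper stable ideal; by the defining hypothesis of a $*$-local ring there is exactly one, namely $\J$, which is therefore the largest proper stable ideal. Choosing $\mfp\in{\rm MSpec}(R)$ with $\J\subseteq\mfp$ (possible since $\J$ is proper), I get $\J=\J^*\subseteq\mfp^*$ too, hence $\J\subseteq\mfp\cap\mfp^*$; and $\mfp\cap\mfp^*$ is a proper stable ideal, so $\mfp\cap\mfp^*\subseteq\J$, giving $\J=\mfp\cap\mfp^*$.

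Now let $\mfm\in{\rm MSpec}(R)$ be arbitrary. Since $\mfm$ and $\mfm^*$ are two-sided, $\mfm\,\mfm^*\subseteq\mfm\cap\mfm^*\subseteq\J\subseteq\mfp$, and primeness of $\mfp$ forces $\mfm\subseteq\mfp$ or $\mfm^*\subseteq\mfp$, hence $\mfm=\mfp$ or $\mfm=\mfp^*$ by maximality. Therefore ${\rm MSpec}(R)=\{\mfp,\mfp^*\}$, and the Jacobson radical of $R$, being the intersection of the maximal left ideals, equals $\mfp\cap\mfp^*=\J$. That $D=R/\mfp$ is a division ring follows because $\mfp$ is a two-sided maximal left ideal, so $R/\mfp$ has no nonzero proper left ideals. (If one wishes, when $\mfp\neq\mfp^*$ one also gets $R/\J\cong R/\mfp\times R/\mfp^*$ by coprimality, a product of two division rings, and $R/\J=D$ when $\mfp=\mfp^*$.)

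Finally, the unit criterion is a chain of equivalences. The conditions ``$x\notin\mfp$ and $x^*\notin\mfp$'' and ``$x\notin\mfp$ and $x\notin\mfp^*$'' coincide since $x^*\in\mfp\Leftrightarrow x\in\mfp^*$, and, as $\mfp$ and $\mfp^*$ are exactly the maximal left ideals, this says $Rx$ lies in no maximal left ideal, i.e.\ $Rx=R$, i.e.\ $x$ is left invertible; by Dedekind finiteness of $R$ (already established in the text for $l$-maximal rings with involution) this is equivalent to $x\in R^\times$. Trivially $x\in R^\times$ implies $\bar x\in(R/\J)^\times$, and conversely if $\bar x$ is invertible in $R/\J$ it is in particular left invertible there, so $Rx+\J=R$; since $\J$ lies in every maximal left ideal this again forces $Rx=R$, hence $x\in R^\times$. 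The only step demanding care is the identification ${\rm MSpec}(R)=\{\mfp,\mfp^*\}$ — i.e.\ ruling out extra maximal ideals — which has to play the \emph{stable} maximality defining $\J$ against the ordinary primeness of the $\mfp$'s through the $l$-maximal dictionary; everything else is bookkeeping.
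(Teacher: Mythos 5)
Your proof is correct, and in its two key steps it takes a genuinely different route from the paper. To show ${\rm MSpec}(R)=\{\mfp,\mfp^*\}$, the paper argues by contradiction with a Chinese Remainder Theorem computation: a third maximal ideal $\mathfrak a$ would give $(\mfm\cap\mfm^*)+(\mathfrak a\cap\mathfrak a^*)=R$, which cannot sit inside the proper ideal $\J$. You instead use that every maximal two-sided ideal is prime (via the $l$-maximal dictionary identifying maximal left, right and two-sided ideals) and feed $\mfm\,\mfm^*\subseteq\mfm\cap\mfm^*\subseteq\J\subseteq\mfp$ into that primeness; this is cleaner and avoids invoking CRT for noncommutative rings. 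Likewise, for the unit criterion the paper again goes through CRT on $R/\J$ together with the fact that $1+\J\subseteq R^\times$, whereas you characterize left invertibility by non-membership in the (two) maximal left ideals, upgrade to invertibility by the Dedekind finiteness already recorded in the text, and lift units through $\J$ by the standard Nakayama-type argument $Rx+\J=R\Rightarrow Rx=R$. Your identification $\J=\mfp\cap\mfp^*$ via ``the unique maximal stable ideal is the largest proper stable ideal'' (Zorn) is essentially the same observation the paper makes, but you replace its element-chasing for the inclusion $\mathfrak s\subseteq\mfm\cap\mfm^*$ by the one-line $\J=\J^*\subseteq\mfp\cap\mfp^*$; and your direct simple-module argument for $R/\mfp$ being a division ring substitutes for the paper's appeal to semilocality. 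The trade-off: the paper's proof is shorter given the cited facts from Lam, while yours is more self-contained and makes the role of the $l$-maximality hypothesis (all maximal one-sided ideals are two-sided, hence prime) more transparent.
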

\begin{proof}
Let $\mathfrak{s}$ be the unique stable maximal ideal of $R$. Observe that $\mathfrak{s}$ is contained in a maximal ideal $\mathfrak{m}$ of $R$, and we have that $\mathfrak{m} \cap \mathfrak{m}^*$ is a stable ideal. By hypothesis $\mathfrak{s} \supset \mathfrak{m} \cap \mathfrak{m}^*$. If $x$ were an element of $\mathfrak{s} \setminus \mathfrak{m} \cap \mathfrak{m}^*$, then $x \notin \mathfrak{m}$ or $x^* \notin \mathfrak{m}$. If $x^* \notin \mathfrak{m}$, for example, we would then have
\[ R = Rx^* + \mathfrak{m} \subset 
   \mathfrak{s} + \mathfrak{m} \subset \mathfrak{m}, \]
a contradiction; and, similarly if $x \notin \mathfrak{m}$. Hence, we must have $\mathfrak{s} = \mathfrak{m} \cap \mathfrak{m}^*$.

Now, if $\mathfrak{a} \in {\rm MSpec}(R)$ were distinct from $\mathfrak{m}$, $\mathfrak{m}^*$, then so would $\mathfrak{a}^* \in {\rm MSpec}(R)$. But then
\begin{equation}\label{eq:2:4:max}
  \mathfrak{s} \supset 
  (\mathfrak{m} \cap \mathfrak{m}^*) + (\mathfrak{a} \cap \mathfrak{a}^*) = R,
\end{equation}
where the last equality can be seen by using the Chinese Remainder Theorem. However, equation~\eqref{eq:2:4:max} gives a contradiction. Therefore, we must have
\[ {\rm MSpec}(R) = \left\{ \mathfrak{m}, \mathfrak{m}^*\right\} \text{ and } \mathfrak{s} = \J, \]
the Jacobson radical.

An application of the Chinese Remainder Theorem, tells us that
\[ x + \J \in \left( R/\mathcal{J} \right)^\times \ \Longleftrightarrow \
   x + \mathfrak{m} \in \left( R/\mathfrak{m} \right)^\times\!\!, \
   x + \mathfrak{m}^* \in \left( R/\mathfrak{m}^* \right)^\times.
\]
Now, the Jacobson radical has the property that $1+y$ is invertible for every $y \in \J$. From here, we can infer that
\[ x \in R^\times \ \Longleftrightarrow \ 
   x + \J \in \left( R/\mathcal{J} \right)^\times \ \Longleftrightarrow \
   x \notin \mathfrak{m}, \ x \notin \mathfrak{m}^* . \]
Finally, $R$ is semilocal, hence Dedekind finite \cite{La2001}. Then a maximal ideal $\mfm \in {\rm MSpec}(R)$ is prime, i.e. $\mfm = \mfp \in {\rm Spec}(R)$, and the quotient $D = R/\mfp$ is a division ring.
\end{proof}

We thus have two possibilities for a $*$-local ring $R$,  depending if it has one or two maximal prime ideals. We refer to the former case as a $1$-\emph{local ring with involution}, or simply a \emph{local ring with involution}, since $R$ has a unique maximal ideal $\mfp = \mfp^*$. And call the latter a \emph{$2$-local ring with involution}, where $\mfp \neq \mfp^*$.

To give an example of a $2$-local ring with involution, take a local ring $R$ with maximal ideal $\mfp$, then we form the semilocal ring $S = R \oplus R$ and provide it with the flip involution $(r,r')^* = (r',r)$. Then $S$ is a $*$-local ring that is not a local ring.

\subsection{Reduction mod $\mfp$ for $*$-local rings}\label{q:loc:mod:p}

Let $R$ be a $*$-local ring with
\[ {\rm MSpec} = \{ \mfp, \mfp^* \}. \]
We consider the ring $S = R \oplus R$ with flip${}^*$ involution
\[ (x,y)^* = (y^*,x^*). \]
Then $R$ is isomorphic to the diagonal subring
\[ R_\Delta = \left\{ (z,z) \mid z \in R \right\} \subset S. \]
In this setting, the involution on $R$ is compatible with the flip${}^*$ involution
\begin{equation*}
   \begin{tikzcd} 
      R \arrow[r,"*"] \arrow[d,swap,"\wr"] & 
      R \arrow[d,"\wr"]\\
      R_\Delta \arrow[r,"{\rm flip}^*"] & R_\Delta
   \end{tikzcd}
\end{equation*}
We fix a maximal prime ideal $\mfp$ of $R$, and reduce mod $\J$. We write
\[ \overline{R} = R/\mathcal{J}, \quad \overline{S} = 
   R/\!\mfp\,\oplus\,R/\!\mfp^*, \]
where we have two projection maps
\[ x \in R \mapsto \bar{x} = x + \mfp \in R/\mfp \text{ and } y \in R \mapsto \tilde{y} = y + \mfp^* \in R/\mfp^*, \]
giving rise to a projection from $S$ to $\overline{S}$, $\pi : (x,y) \mapsto (\bar{x},\tilde{y})$. Let
\[ \overline{R}_{\mfp} =  \left\{ (\bar{z},\tilde{z}) \mid z \in R \right\} \subset \overline{S}, \]
so that we obtain a non-canonical projection map $\pi$.
\begin{equation*}
   \begin{tikzcd} 
      R_\Delta \arrow[r,"\sim"] \arrow[rrd,swap,"\pi"] & 
      R \arrow[r,"{\rm proj}"] & R/\mathcal{J} \arrow[d,"\wr"]\\
      & & \overline{R}_{\mfp}
   \end{tikzcd}
\end{equation*}
Here, $\overline{R}_{\mfp}$ is isomorphic to $R/\mfp$ when $R$ is a local ring, and is the degree $2$ separable algebra $R/\mfp \oplus R/\mfp^* \simeq R/\mathcal{J}$ when $R$ is a $2$-local ring, equipped with the involution
\[ (\bar{z}, \tilde{z}) \mapsto (\bar{z}^*, \tilde{z}^*). \]
In the latter case, notice that the isomorphism $R/\mathcal{J} \simeq \overline{R}_{\mfp}$ is obtained via the Chinese Remainder Theorem, where every $(\bar{x},\tilde{y})$ corresponds to a $(\bar{z},\tilde{z}) \in \overline{R}_{\mfp}$, $z \in R$.

We wish to construct a non-canonical section map $\sigma$ for $\pi$, in such a way that it is compatible with symmetry and invertibility. From Lemma~\ref{one:two:qloc}, we know that
\begin{equation*}
   z \in R^\times \simeq R_{\Delta}^\times \ \Longleftrightarrow \ 
   \pi(z) = (\bar{z}, \tilde{z}) \in \overline{R}_{\mfp}^\times.
\end{equation*}
We build a set consisting of pairs of representatives
\[ R_\sigma^\times = \{ (z_\sigma, z_\sigma) \in R_\Delta^\times \mid
   (\bar{z}_\sigma, \tilde{z}_\sigma) \in \overline{R}_{\mfp}^\times \}, \]
which satisfy
\[ z = z^* \ \Longleftrightarrow \ z_\sigma = z_\sigma^*. \]
We enlarge this set to obtain a section from $R_{\mfp}$ to $R_\Delta$, in such a way that it respects symmetry,
\[ R_\sigma = R_\sigma^\times \cup 
   \left\{ (z_\sigma,z_\sigma) \mid z_\sigma \in \mfp \cup \mfp^* \right\}. \]
Hence, by construction, we have
\[ z \in R^\times \simeq R_\Delta^\times \ \Longleftrightarrow \ z_\sigma \in R^\times. \]
and
\[ z \in R^{\, \rm sym} \simeq R_\Delta^{\, \rm sym} \ \Longleftrightarrow \ z_{\sigma} \in R^{\, \rm sym}. \]

We next extend this further to $A = {\rm M}(n,R)$, where $(A,*) \in \mathcal{A}_R$ for the involution given by \eqref{nxn:invo}, where in this context $\J$ denotes the Jacobson radical of $A$. We let
\[ \overline{A}_{\mfp} = {\rm M}(n,\overline{R}_{\mfp}) =
   \left\{ (\bar{a},\tilde{a}) \in {\rm M}(n,R/\!\mfp) \oplus {\rm M}(n,R/\!\mfp^*) 
   \mid a \in {\rm M}(n,R) \right\}, \]
which has involution
\[ (\bar{a},\tilde{a}) \mapsto (\bar{a}^*,\tilde{a}^*). \]
Hence, we also have a non-canonical projection $\pi$ in this setting.
\begin{equation}\label{eq:non:can:proj}
   \begin{tikzcd} 
      A \simeq {\rm M}(n,R_\Delta) \arrow[r,"{\rm proj}"] \arrow[r,bend right,start anchor={[xshift=-9ex]},end anchor={[xshift=5ex]},"\pi"'] & 
      \overline{A}_{\mfp} \simeq A/\mathcal{J}.
   \end{tikzcd}
\end{equation}

We continue by extending the section map on $R$, entry-wise for the elements of $A$. Setting
\[ A_\sigma = \left\{ a_\sigma=(a_{ij}) \in {\rm M}(n,R) \mid a_{ij} \in R_\sigma \right\}, \] 
allows for writing $A$ as a sum of two sets
\[ A = A_\sigma + \mathcal{J}, \]
with $A_\sigma \cap \mathcal{J} = \{ 0 \}$. Given $a \in A$, we obtain a unique decomposition
\begin{equation}\label{A:decomp}
   a = a_\sigma + a_{\j}, \quad a_\sigma \in A_\sigma, \ a_{\j} \in \mathcal{J}.
\end{equation}
Furthermore, symmetric elements are such that
\begin{equation}\label{decomp:sym}
   a = a^* \ \Longleftrightarrow \ a_\sigma= a_\sigma^* \text{ and } a_{\j} = a_{\j}^*.
\end{equation}
We summarize the basic properties of the above construction in the following.

\begin{lemma}\label{proj:sec:q-loc:lem}
Let $R$ be a $*$-local ring, and form the matrix ring $A = {\rm M}(n,R)$. For the projection map
\[ A \xrightarrow{\ \pi \ } \overline{A} = A/\mathcal{J}, \quad a \mapsto a + \J, \]
there exists a non-canonical section map
\[ \overline{A} \xrightarrow{\ \sigma \ } A, \quad 
   a + \J \mapsto a_\sigma. \]
The maps preserve the involutions on $A$ and $\overline{A}$, and the following properties hold:
\begin{itemize}
   \item[(i)] The projection $\pi(a)$ is symmetric if and only if the section $a_\sigma$ is symmetric.
   \item[(ii)] The units satify
   \[ a \in A^\times \ \Longleftrightarrow \ 
      \pi(a) \in \overline{A}^\times \ \Longleftrightarrow \
      a_\sigma,a_\sigma^* \in A^\times. \]
\end{itemize}
\end{lemma}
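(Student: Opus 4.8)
The plan is to assemble the construction carried out in \S\ref{q:loc:mod:p}: almost all the work is already there, and the proof amounts to recording how the pieces fit. First I would take $\sigma$ to be the map sending $a+\mathcal{J}$ to the element $a_\sigma$ of the decomposition \eqref{A:decomp}. Since $A = A_\sigma + \mathcal{J}$ with $A_\sigma \cap \mathcal{J} = \{0\}$, every coset of $\mathcal{J}$ meets the transversal $A_\sigma$ in exactly one point, so $\sigma$ is well defined and $\pi \circ \sigma = {\rm id}_{\overline{A}}$. I would point out at this stage that $\sigma$ is only a set-theoretic section — in general neither additive nor multiplicative — because $A_\sigma$ is merely a transversal, not a subring; the lemma only asserts that it is a section preserving involutions and satisfying (i) and (ii).

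Next I would verify that both maps preserve the involutions. The ideal $\mathcal{J} = \mathcal{J}(A)$ is stable, being the Jacobson radical (equivalently, by Lemma~\ref{one:two:qloc} applied to $R$ and lifted to ${\rm M}(n,R)$, the unique maximal stable ideal), so $\pi(a^*) = \pi(a)^*$ is automatic. For $\sigma$ I must check $(a^*)_\sigma = (a_\sigma)^*$; this holds because $R_\sigma$ was built in \S\ref{q:loc:mod:p} to be stable under the involution of $R$ and to represent symmetric residues by symmetric elements, so $A_\sigma$ is stable under the $*$-transpose \eqref{nxn:invo}. Writing $a = a_\sigma + a_{\j}$ then gives $a^* = a_\sigma^* + a_{\j}^*$ with $a_\sigma^* \in A_\sigma$ and $a_{\j}^* \in \mathcal{J}$, and by uniqueness in \eqref{A:decomp} this is precisely the decomposition of $a^*$.

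For part (i) I would argue that $a - a_\sigma = a_{\j} \in \mathcal{J}$, so $\pi(a) = \pi(a_\sigma)$; this element is symmetric if and only if $\pi(a_\sigma) = \pi(a_\sigma)^* = \pi(a_\sigma^*)$, i.e.\ if and only if $a_\sigma$ and $a_\sigma^*$ lie in the same coset of $\mathcal{J}$, and — since both belong to the transversal $A_\sigma$ — if and only if $a_\sigma = a_\sigma^*$; this is essentially the content of \eqref{decomp:sym}. For part (ii) I would use that, $\mathcal{J}$ being the Jacobson radical of $A$, an element $x$ is a unit of $A$ if and only if $\pi(x)$ is a unit of $\overline{A}$: if $xy \equiv 1 \equiv yx \pmod{\mathcal{J}}$ for some $y$, then $xy, yx \in 1 + \mathcal{J} \subseteq A^\times$, so $x$ is both right- and left-invertible, hence invertible; the converse is clear. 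Applied to $a$ this yields $a \in A^\times \Leftrightarrow \pi(a) \in \overline{A}^\times$, and applied to $a_\sigma$, together with $\pi(a) = \pi(a_\sigma)$, it yields $\pi(a) \in \overline{A}^\times \Leftrightarrow a_\sigma \in A^\times$. Finally $a_\sigma \in A^\times \Leftrightarrow a_\sigma^* \in A^\times$, since $*$ is an anti-automorphism of $A$ and so maps $A^\times$ onto itself, which closes the chain of equivalences in (ii).

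The only step carrying genuine content is the one imported from \S\ref{q:loc:mod:p}: that the transversal $R_\sigma$, hence $A_\sigma$, can be chosen $*$-stable with symmetric residues lifted to symmetric elements. This is where the $*$-local hypothesis is really used — through the Chinese Remainder Theorem in the $2$-local case and through the division-ring quotient $D = R/\mfp$ in the $1$-local case — and I expect it to be the main obstacle. Everything else is formal bookkeeping around $\mathcal{J}$ being the Jacobson radical and the uniqueness of the decomposition \eqref{A:decomp}.
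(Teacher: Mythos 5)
Your proposal is correct and follows essentially the same route as the paper: both rest on the $*$-stable transversal $A_\sigma$ from \S~2.2, the uniqueness of the decomposition \eqref{A:decomp} for the involution statements and part (i), and the fact that $1+\mathcal{J}\subseteq A^\times$ for part (ii). If anything, your handling of the implication $\pi(a)\in\overline{A}^\times\Rightarrow a_\sigma\in A^\times$ (via $a_\sigma b_\sigma\in 1+\mathcal{J}$ giving one-sided inverses on each side) is slightly more careful than the paper's, which asserts $a_\sigma b_\sigma = b_\sigma a_\sigma = 1$ outright even though $A_\sigma$ is only a transversal and not multiplicatively closed.
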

\begin{proof}
By definition, $\pi$ preserves the involutions. The existence of $\sigma$ is due to the decomposition \eqref{A:decomp}, where we have
\[ a^* = a_\sigma^* + a_{\j}^*. \]
Hence, the section preserves the involutions, namely,
\[ (a_\sigma)^* = (a^*)_\sigma. \]
Property (i) follows from \eqref{decomp:sym}.

For invertibility, first suppose $a_\sigma, a_\sigma^* \in A^\times$. The elements of the form
\begin{equation*}
   1 + x, \ \text{ with } x \in \mathcal{J},
\end{equation*}
are known to be invertible by Bass' Lemma~6.4 of \cite{Ba1964}. Hence
\[ a_\sigma^{-1}a = 1 + a_\sigma^{-1}a_{\j} \in A^\times, \]
and we conclude that $a \in A^\times$. Clearly $a \in A^\times \, \Longrightarrow \, \pi(a) \in \overline{A}^\times$. Now, suppose that $\pi(a) \in \overline{A}^\times$; and, for brevity write $\bar{a} = \pi(a)$, $\bar{b} = \pi(b)$. Then $\bar{a} \cdot \bar{b} = \bar{b}\bar{a} = \bar{1}$ for some $\bar{b} \in \overline{A}^\times$. By writing $a = a_\sigma + a_{\j}$, $b = b_\sigma + b_{\j}$, we obtain $a_\sigma b_\sigma = b_\sigma a_\sigma = 1$. Thus $a_\sigma \in A^\times$, and $a_\sigma^* \in A^\times$.
\end{proof}

\section{$*$-Euclideanity over local and ad\`ele rings}\label{Eu:Br:Ad}

We study involutive matrix rings over ad\`elic rings and their $*$-Euclidean property. This global situation, naturally poses questions for rings with involution in the local setting. Hence, we begin by establishing for $*$-local rings, a result that Soto-Andrade proved when $R$ is a field in Chapitre III, \S~1.2, Lemme~3 of \cite{SA1978}. And, for division rings $R$, it is part of the ``Co-prime Lemma'' of Pantoja and Soto-Andrade \cite{PaSA2003}. We then extend to ad\`elic rings over a global field $F$, we consider separable quadratic extensions of global fields $E/F$, in addition to quaternion algebras over $F$.

\subsection{Local setting} Throughout this subsection, we let $R$ be a $*$-local ring, and form the ring
\[ A = {\rm M}(n,R), \quad (A,*) \in \mathcal{A}_R, \]
with the involution induced from that of $R$. We let $\J$ denote the Jacobson radical of $A$, or sometimes the Jacobson radical of $R$, and it should be clear from context which one is being used.

\begin{theorem}\label{thm:q-local}
The ring of $n \times n$ matrices $A$ over a $*$-local ring is $*$-Euclidean.
\end{theorem}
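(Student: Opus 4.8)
The plan is to reduce the problem to the residue ring $\overline{A} = A/\mathcal{J}$ by means of the section map $\sigma$ constructed in Lemma~\ref{proj:sec:q-loc:lem}, and then to prove $*$-Euclideanity downstairs, where the structure is much more rigid. Concretely, I would first establish two reduction lemmas (these are exactly the Lemmas \ref{lem:local:A/J:A} and \ref{2:div:rings} announced in the introduction). The first, Lemma~\ref{lem:local:A/J:A}, says: \emph{if $\overline{A} = A/\mathcal{J}$ is $*$-Euclidean, then so is $A$.} The second, Lemma~\ref{2:div:rings}, handles the base case: if $R = D_1 \times D_2$ with $D_1,D_2$ division rings and $\varphi\colon D_1 \to D_2$ an anti-isomorphism, then $A = {\rm M}(n,R)$ with the $\varphi$-flip transpose involution is $*$-Euclidean. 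Granting these, the theorem follows: by Lemma~\ref{one:two:qloc}, $\overline{R} = R/\mathcal{J}$ is either a division ring $D$ (the $1$-local case, covered by the known result of Soto-Andrade and Pantoja--Soto-Andrade for ${\rm M}(n,D)$) or a product $R/\mfp \oplus R/\mfp^* = D_1 \times D_2$ of division rings swapped by the involution (the $2$-local case, covered by Lemma~\ref{2:div:rings}); in either case $\overline{A} = {\rm M}(n,\overline{R})$ is $*$-Euclidean, so Lemma~\ref{lem:local:A/J:A} gives the result.

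For the first reduction lemma, the key point is that the section $\sigma$ is compatible with symmetry and invertibility by Lemma~\ref{proj:sec:q-loc:lem}. Given a coprime symmetric pair $(a,c)$ in $A$ with $Aa + Ac = A$ and $a^*c \in A^{\rm sym}$, I would push it down to $\overline{A}$: coprimality is preserved by $\pi$, and $\overline{a}^*\,\overline{c}$ is symmetric. Apply $*$-Euclideanity of $\overline{A}$ to produce a division chain $\overline{r}_{i-1} = \overline{s}_i\,\overline{r}_i + \overline{r}_{i+1}$ with $\overline{s}_i$ symmetric and $\overline{r}_n \in \overline{A}^\times$. Now lift each $\overline{s}_i$ to a symmetric element $s_i := \sigma(\overline{s}_i) \in A^{\rm sym}$ (using part~(i) of Lemma~\ref{proj:sec:q-loc:lem}), and define $r_i \in A$ recursively by the \emph{same} recursion $r_{i-1} = s_i r_i + r_{i+1}$, starting from $r_{-1} = a$, $r_0 = c$ — so $r_{i+1} := r_{i-1} - s_i r_i$. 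By construction $\pi(r_i) = \overline{r}_i$ for all $i$, in particular $\pi(r_n) = \overline{r}_n \in \overline{A}^\times$, hence $r_n \in A^\times$ by part~(ii) of Lemma~\ref{proj:sec:q-loc:lem}. This exhibits a valid $*$-Euclidean division chain for $(a,c)$ in $A$. One must double-check that the intermediate $r_i$ need not be symmetric (they are not required to be — only the $s_i$ are) and that the final $r_n$ is genuinely a unit of $A$, which is the one place the Jacobson-radical lifting of units (Bass' lemma, already invoked in Lemma~\ref{proj:sec:q-loc:lem}) does the real work.

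For the base case Lemma~\ref{2:div:rings}, I would argue by transporting the $\varphi$-flip involution on ${\rm M}(n, D_1 \times D_2)$ to a more familiar involution. An element of $A = {\rm M}(n, D_1 \times D_2)$ is a pair $(a_1, a_2)$ with $a_i \in {\rm M}(n,D_i)$, and the involution is $(a_1,a_2)^* = (\varphi^{-1}(a_2^{t}), \varphi(a_1^{t}))$, where $\varphi$ is applied entrywise and $t$ is the transpose. The subtlety is that a symmetric element $(a_1,a_2)$ is determined by $a_1$ alone (with $a_2 = \varphi(a_1^t)$), so symmetry imposes no constraint on $a_1$; and coprimality $Aa + Ac = A$ splits as $a_1, c_1$ being left-coprime in ${\rm M}(n,D_1)$ and $a_2, c_2$ being left-coprime in ${\rm M}(n,D_2)$. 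Since ${\rm M}(n,D_i)$ is a simple Artinian ring, left-coprimality of two such elements is controlled by the ranks of their column spans. I would then run an explicit division algorithm: using that $D_i$-matrices admit a Euclidean-type division, reduce the pair $(a,c)$ in finitely many steps $r_{i-1} = s_i r_i + r_{i+1}$ with each $s_i$ forced symmetric (hence of the form $(s, \varphi(s^t))$ for arbitrary $s \in {\rm M}(n,D_1)$, which is a free choice), terminating at an invertible $r_n = (r_n^{(1)}, r_n^{(2)}) \in {\rm M}(n,D_1)^\times \times {\rm M}(n,D_2)^\times$. The main obstacle, and the step I expect to require the most care, is precisely this: organizing the division so that the chosen $s_i$ are simultaneously symmetric for the flip involution \emph{and} effect the rank reduction in both coordinates in a coherent way — in the field case (Soto-Andrade) this is a clean row/column reduction, but here one must check nothing breaks because the two division rings are only linked through the anti-isomorphism $\varphi$, and in particular that the freedom in choosing $s_i$ symmetric is exactly enough to mimic the field-case reduction. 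Once the base case and the lifting lemma are in place, assembling the proof of Theorem~\ref{thm:q-local} is immediate from the dichotomy of Lemma~\ref{one:two:qloc}.
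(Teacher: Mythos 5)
Your overall architecture is exactly the paper's: reduce modulo the Jacobson radical via the section map of Lemma~\ref{proj:sec:q-loc:lem}, prove the lifting statement (Lemma~\ref{lem:local:A/J:A}) together with the two base cases (${\rm M}(n,D)$ for the $1$-local case, a product of two division rings for the $2$-local case), and assemble via the dichotomy of Lemma~\ref{one:two:qloc}. Your treatment of the lifting lemma is correct and, if anything, cleaner than the paper's: defining $r_{i+1} := r_{i-1} - s_i r_i$ recursively with $s_i = \sigma(\overline{s}_i)$ forces $\pi(r_i) = \overline{r}_i$ for all $i$, so part~(ii) of Lemma~\ref{proj:sec:q-loc:lem} immediately gives $r_n \in A^\times$; the paper instead lifts all the $\overline{r}_i$ by $\sigma$ and tracks error terms in $\mathcal{J}$, reaching the same conclusion via Bass' lemma.

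The genuine gap is in the base case Lemma~\ref{2:div:rings}. You correctly observe that a symmetric element $s = (s_1, \varphi(s_1)^t)$ leaves $s_1$ completely free but then determines the second coordinate, and you flag ``organizing the division so that the $s_i$ effect the rank reduction in both coordinates in a coherent way'' as the hard step --- but you offer no mechanism for doing this, and a direct simultaneous reduction is not available precisely because the two coordinates of $s$ cannot be tuned independently. The idea you are missing is that the second coordinate need not be controlled at all. Choose $s_1 = -e^{-1}f$, with $e,f$ products of elementary matrices such that $ea_1 + fc_1 = u$ is unipotent; then $r_1 = a_1 - s_1c_1 = e^{-1}u \in A_1^\times$ by a purely first-coordinate row reduction. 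The invertibility of $r_2$ is then \emph{automatic}: the hypothesis $a^*c = c^*a$ transports along $a = sc + r$ to $c^*r = r^*c$, i.e.\ $\varphi(r_1)^t c_2 = \varphi(c_1)^t r_2$, and combining this with $A_2c_2 + A_2r_2 = A_2$ (which follows from $Aa + Ac = A$) and the invertibility of $\varphi(r_1)^t$ yields
\[
A_2 c_2 + A_2 r_2 = A_2 \ \Longrightarrow \ A_2\varphi(c_1)^t r_2 + A_2 r_2 = A_2 \ \Longrightarrow \ A_2 r_2 = A_2,
\]
hence $r_2 \in A_2^\times$. Without this observation your plan for the $2$-local base case does not close; it is also the reason the decomposition length is $1$ there.
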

We need a couple of results in order to prove this theorem, where the key point is reduction mod $\J$.

\begin{lemma}\label{lem:local:A/J:A}
If the ring $\overline{A} = A/\mathcal{J}$ is $*$-Euclidean, then so is $A$.
\end{lemma}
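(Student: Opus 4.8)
The plan is to lift a $*$-Euclidean division chain from $\overline{A} = A/\mathcal{J}$ to $A$ by working with the non-canonical section $\sigma$ of Lemma~\ref{proj:sec:q-loc:lem} and controlling the error terms that live in $\mathcal{J}$. So suppose $a, c \in A$ satisfy $Aa + Ac = A$ and $a^*c \in A^{\rm sym}$. First I would check that the reductions $\bar a = \pi(a)$, $\bar c = \pi(c)$ still satisfy the hypotheses in $\overline{A}$: applying $\pi$ to a relation $ua + vc = 1$ gives $\bar u \bar a + \bar v \bar c = \bar 1$, so $\overline{A}\bar a + \overline{A}\bar c = \overline{A}$, and $\pi$ preserves the involution so $\bar a^* \bar c = \overline{a^*c} \in \overline{A}^{\rm sym}$. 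By hypothesis $\overline{A}$ is $*$-Euclidean, so there is a division chain in $\overline{A}$ of some length $m$: symmetric $\bar s_0, \dots, \bar s_{m-1}$, elements $\bar r_1, \dots, \bar r_{m-1}$, a unit $\bar r_m \in \overline{A}^\times$, with $\bar r_{-1} = \bar a$, $\bar r_0 = \bar c$ and $\bar r_{i-1} = \bar s_i \bar r_i + \bar r_{i+1}$.

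Next I would lift. Use $\sigma$ to choose $s_i := \sigma(\bar s_i) = (s_i)_\sigma \in A^{\rm sym}$ (symmetric by part (i) of Lemma~\ref{proj:sec:q-loc:lem}, since $\bar s_i$ is symmetric), and set $r_{-1} = a$, $r_0 = c$. Then define $r_{i+1} := r_{i-1} - s_i r_i$ recursively for $i = 0, \dots, m-1$. Each such $r_i$ reduces mod $\mathcal{J}$ to $\bar r_i$ (induction, using that $\pi$ is a ring homomorphism and $\pi(s_i) = \bar s_i$), so in particular $\pi(r_m) = \bar r_m \in \overline{A}^\times$, whence $r_m \in A^\times$ by part (ii) of Lemma~\ref{proj:sec:q-loc:lem}. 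This already produces a valid $*$-Euclidean chain $r_{-1} = a$, $r_0 = c$, $r_{i-1} = s_i r_i + r_{i+1}$ for $i = 0, \dots, m-1$ with $r_m$ a unit and all $s_i$ symmetric, which is exactly what the definition of $*$-Euclidean in \S\ref{*:Euc} requires. Hence $A$ is $*$-Euclidean.

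\textbf{Main obstacle.} The one genuinely delicate point is making sure the lifted chain really satisfies \emph{all} the clauses of the definition and not just the recursion $r_{i-1} = s_i r_i + r_{i+1}$. The definition only demands that the $s_i$ be symmetric, the $r_i$ ($1 \le i \le n-1$) lie in $A$, and $r_n$ be a unit — it does not impose $r_i^* r_{i+1} \in A^{\rm sym}$ at each stage — so once $s_i = s_i^*$ and $r_m \in A^\times$ are secured, nothing further is needed; I would double-check the excerpt's phrasing to confirm this. A second small subtlety: I should verify that the argument does not secretly need $a \notin \mathcal{J}$ or similar, i.e. that the degenerate cases where $\bar a$ or $\bar c$ is zero or a unit are already covered — but they are, since the hypothesis $\overline{A}\bar a + \overline{A}\bar c = \overline{A}$ forces the $\overline{A}$-chain to exist regardless, and if $\bar c$ is already a unit one simply takes $m$ as small as the definition allows. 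The lifting of units across $\pi$ is the substantive input, and that is exactly Lemma~\ref{proj:sec:q-loc:lem}(ii) (equivalently Bass's lemma that $1 + \mathcal{J} \subset A^\times$), so the proof is short modulo that.
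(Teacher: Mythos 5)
Your proof is correct and follows essentially the same route as the paper: reduce the pair mod $\mathcal{J}$, invoke the hypothesis on $\overline{A}$, lift the symmetric quotients via the section $\sigma$ of Lemma~\ref{proj:sec:q-loc:lem}, and conclude that the final remainder is a unit because units lift along $\pi$ (Lemma~\ref{proj:sec:q-loc:lem}(ii), ultimately $1+\mathcal{J}\subset A^\times$). Your bookkeeping is in fact slightly cleaner than the paper's: by defining $r_{i+1}:=r_{i-1}-s_ir_i$ recursively from the actual $a,c$ you get the division chain exactly, whereas the paper lifts every $\bar r_i$ through $\sigma$ and must then absorb error terms $x_i\in\mathcal{J}$ into the next remainder.
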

\begin{proof}
We have elements $a, c \in A$, which we can reduce mod $\mathcal{J}$, namely, we look at $\bar{a}, \bar{c} \in \overline{A} = {\rm M}(n,\overline{R})$. We have the hypothesis
\begin{align*}
   a^*c \in A^{\rm sym} \ 
   &\Longrightarrow \ \bar{a}^*\bar{c} \in \overline{A}^{\,\rm sym} \\
   Aa + Ac = A \ 
   &\Longrightarrow \ \overline{A}\bar{a} + \overline{A}\bar{c} = \overline{A}.
\end{align*}
Identify $A$ with ${\rm M}(n,R_\Delta)$ and $\overline{A} = A/\mathcal{J}$ with $\overline{A}_{\mfp}$, as in \eqref{eq:non:can:proj}, where we have a projection map $\pi$. Setting $\bar{a} = \bar{r}_{-1}$ and $\bar{c} = \bar{r}_0$, then, by assumption, there exist elements $\bar{s}_0, \ldots, \bar{s}_{n-1} \in A^{\text{sym}}$, $\bar{r}_1, \ldots, \bar{r}_{n-1} \in A$ and $\bar{r}_n \in A^\times$ such that
\[ \bar{r}_{i-1} = \bar{s}_i \bar{r}_i + \bar{r}_{i+1}, \]
for $i = 0, \ldots, n-1$.

The section map of Lemma~\ref{proj:sec:q-loc:lem} gives
\begin{equation*}
   a_\sigma, \ c_\sigma, \ s_{i,\sigma}, \ r_{i,\sigma} \in A_\sigma \subset A,
\end{equation*}
where $s_{i,\sigma} \in A^{\text{sym}}$, $r_{i,\sigma} \in A^{\text{sym}}$ and $r_{n,\sigma} \in A^{\times}$. We then have
\[ \overline{r}_{i-1,\sigma} = \overline{s_{i,\sigma} r_{i,\sigma} + r_{i+1,\sigma}} \text{ in } \overline{A} = A/\mathcal{J}. \]
Hence, there exists in every step an $x_i \in \mathcal{J}$ such that
\begin{equation*}
   r_{i-1,\sigma} + r_{i-1,\j} = 
   s_{i,\sigma}r_{i,\sigma} + (r_{i+1,\sigma}  + x_i).
\end{equation*}
The last term parenthesis, when $i+1 = n$, is the sum of a unit of $A$ plus an element of the radical of $A$, so it is a unit by Property~(ii) of Lemma~\ref{proj:sec:q-loc:lem}. Hence, the result follows.
\end{proof}

After reducing mod $\J$, there are two possibilities for $R$: it is either a $1$-local or a $2$-local ring with involution. The former gives $\overline{A} = {\rm M}(n,D)$, where $D$ is a division ring, a case proved by Pantoja and Soto-Andrade in Proposition~3.3 of \cite{PaSA2003}. The latter leads to a sum of two division rings after reduction mod $\J$, and we now prove the $*$-Euclidean property in this case.

\begin{lemma}\label{2:div:rings}
Let $D_1$ and $D_2$ be division rings, together with an anti-automorphism $\varphi: D_1 \rightarrow D_2$. Let $A_i = {\rm M}(n,D_i)$, $i = 1$, $2$, and extend $\varphi: A_1 \rightarrow A_2$, componentwise. Then, consider the ring $A = A_1 \oplus A_2$, with $\varphi$-flip transpose involution
   \[ (a_1,a_2)^* = (\varphi^{-1}(a_2)^t,\varphi(a_1)^t). \]
Let $a, c \in A$ be such that
\[ Aa + Ac = A, \quad a^* c = c^* a. \]
Then, there exist
\[ s \in A^{\rm sym} = \{ (x,\varphi(x)^t) \mid x \in A_1 \}, \quad r \in A^\times, \] 
satisfying
\begin{equation*}
   a = s c + r.
\end{equation*}
\end{lemma}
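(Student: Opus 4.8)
The plan is to decompose the problem using the ring isomorphism $A = A_1 \oplus A_2$ and carefully track what the hypotheses $Aa + Ac = A$ and $a^*c = c^*a$ say in each component. Write $a = (a_1, a_2)$ and $c = (c_1, c_2)$ with $a_i, c_i \in A_i = {\rm M}(n, D_i)$. The left-ideal condition $Aa + Ac = A$ splits componentwise into $A_1 a_1 + A_1 c_1 = A_1$ and $A_2 a_2 + A_2 c_2 = A_2$; concretely, over a simple Artinian ring ${\rm M}(n,D_i)$, this means that the pair $(a_i, c_i)$, viewed as a $2n \times n$ matrix over $D_i$ (stacking the rows), has full column rank $n$ — equivalently, there is no nonzero row vector annihilating both $a_i$ and $c_i$ on the left. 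The symmetry condition $a^*c = c^*a$, using the explicit formula for $*$, becomes a single relation in $A_1$ (say), namely $\varphi^{-1}(a_2^t) c_1 = \varphi^{-1}(c_2^t) a_1$, which after applying $\varphi$ and transposing is equivalent to $a_2\,\varphi(c_1)^t = c_2\,\varphi(a_1)^t$; so really it is one $D_2$-matrix identity linking the two components.

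The next step is to produce $s = (x, \varphi(x)^t)$ with $x \in A_1$ and $r = a - sc = (a_1 - x c_1,\ a_2 - \varphi(x)^t c_2) \in A^\times$. By Lemma~\ref{one:two:qloc}-type reasoning — or directly, since $A^\times = A_1^\times \times A_2^\times$ — it suffices to choose $x$ so that both $a_1 - x c_1 \in {\rm GL}(n, D_1)$ and $a_2 - \varphi(x)^t c_2 \in {\rm GL}(n, D_2)$. I would first handle the first component: because $A_1 a_1 + A_1 c_1 = A_1$, the $2n \times n$ block matrix $\binom{a_1}{c_1}$ has full column rank, so by column operations over the division ring $D_1$ (Gaussian elimination / the division-ring analogue of the co-prime lemma already invoked in Proposition~3.3 of \cite{PaSA2003}) there exists $x \in A_1$ with $a_1 - x c_1$ invertible. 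The real work is to show that the \emph{same} $x$, through the constraint $\varphi(x)^t$, makes $a_2 - \varphi(x)^t c_2$ invertible as well — and this is exactly where the symmetry hypothesis must be used, since without it the two components are independent and one cannot expect a single $x$ to work.

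Here is the mechanism I expect to drive that coupling. Apply $\varphi$ and transpose to the second-component expression: $\varphi^{-1}\big(a_2 - \varphi(x)^t c_2\big)^t = \varphi^{-1}(a_2^t) - x\,\varphi^{-1}(c_2^t)$. Set $a_1' = \varphi^{-1}(a_2^t)$, $c_1' = \varphi^{-1}(c_2^t) \in A_1$; then $a_2 - \varphi(x)^t c_2$ is invertible in $A_2$ iff $a_1' - x c_1'$ is invertible in $A_1$, and the symmetry relation derived above reads $a_1' c_1 = c_1' a_1$ in $A_1$. So the entire problem reduces to the following statement \emph{inside $A_1 = {\rm M}(n,D_1)$}: given $a_1, c_1, a_1', c_1'$ with $A_1 a_1 + A_1 c_1 = A_1$, with $A_1 a_1' + A_1 c_1' = A_1$, and with $a_1' c_1 = c_1' a_1$, there exists a single $x \in A_1$ making both $a_1 - x c_1$ and $a_1' - x c_1'$ invertible. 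I would prove this by reduction to $D_1$-linear algebra: $a_1 - x c_1$ is singular iff there is $0 \neq v$ (row vector) with $v a_1 = v x c_1$; the relation $a_1' c_1 = c_1' a_1$ forces the "bad sets" of $x$ for the two conditions to be governed by the same kernel data, so that avoiding one automatically, or with a single further adjustment, avoids the other — in the worst case one argues as in the division-ring co-prime lemma that the set of $x$ failing either condition is a proper "algebraic" subset and picks $x$ outside their union, using that $D_1$ contains enough elements (and, if $D_1$ is a small finite field, invoking the Euclidean-length argument of the co-prime lemma directly on the coupled system).

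The main obstacle, then, is precisely this last algebraic step: showing that one value of $x$ can simultaneously invertibilize the two expressions $a_1 - x c_1$ and $a_1' - x c_1'$, and that the symmetry relation $a_1' c_1 = c_1' a_1$ is exactly the hinge that makes this possible; everything else (splitting into components, the $*$-formula bookkeeping, $A^\times = A_1^\times \times A_2^\times$) is routine. I would model the argument closely on Proposition~3.3 of \cite{PaSA2003}, where the single-division-ring case is handled, and adapt its co-prime/elimination argument to the coupled pair, using $\varphi$ to move the second component's condition into $A_1$ as shown above.
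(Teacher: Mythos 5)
Your setup is exactly right --- splitting into components, reducing the first component to the division-ring co-prime lemma, and recognizing that the symmetry relation is what must couple the two components --- but the step that actually does the coupling is missing, and the mechanism you sketch for it would not work. You propose to choose $x$ avoiding the union of the two ``bad sets'' by a genericity argument; over an infinite field that argument would succeed \emph{without ever using} $a_1'c_1 = c_1'a_1$ (each bad set is a proper determinantal locus once coprimality holds), which signals that it is not engaging with the real content of the lemma, and over a small finite field or a genuinely non-commutative $D_1$ (where the Dieudonn\'e determinant is not a polynomial in the entries) the ``proper algebraic subset'' argument has no meaning; deferring to ``the Euclidean-length argument on the coupled system'' is not a proof. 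There is also a bookkeeping slip: since $X \mapsto \varphi^{-1}(X^t)$ is an \emph{anti}-homomorphism $A_2 \to A_1$, the second component transports to $a_1' - c_1'x$, not $a_1' - xc_1'$.

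The missing idea, which is how the paper closes the argument, is that no simultaneous choice is needed: once $x = -s_1$ is chosen (using only the first-component coprimality, via elementary matrices $e,f$ with $ea_1+fc_1$ unipotent and $s_1 = -e^{-1}f$) so that $r_1 = a_1 - s_1c_1 \in A_1^\times$, the second component of $r = a - sc$ is \emph{automatically} invertible. Indeed, $a = sc+r$ with $s$ symmetric gives both $Ac + Ar = A$ and $c^*r = r^*c$; the latter reads $\varphi(r_1)^t c_2 = \varphi(c_1)^t r_2$ in $A_2$. Since $\varphi(r_1)^t \in A_2^\times$, the coprimality $A_2c_2 + A_2r_2 = A_2$ becomes $A_2\varphi(r_1)^tc_2 + A_2r_2 = A_2$, and substituting the relation gives $A_2\varphi(c_1)^tr_2 + A_2r_2 = A_2$, hence $A_2r_2 = A_2$, so $r_2$ is left invertible and therefore invertible. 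This is the hinge you correctly located but did not supply; without it the proposal does not constitute a proof.
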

\begin{proof}
Let us observe that any $s \in A^{\rm sym}$ and $r \in A^\times$ satisfying $a = sc + r$, must also be such that
\begin{equation}\label{*:c:r}
   Ac + Ar = A.
\end{equation}
And, the symmetry relation $a^*c = c^*a$ leads to
\begin{equation}\label{*:rel:c:r}
   c^*r = r^*c.
\end{equation}
We write
\[ a = (a_1,a_2), \ c = (c_1,c_2), \ s = (s_{1}, s_{2}), \ r = (r_{1},r_{2}). \]

Now, we have by hypothesis
\[ A_1 a_1 + A_1 c_1 = A_1. \]
Since $A_1$ consists of $n \times n$ matrices with entries in a division ring $D_1$, this equation tells us that $a_1$ and $c_1$ must satisfy
\[ {\rm rank}(a_1) + {\rm rank}(c_1) \geq n. \]
Because of this, we can multiply $a_1$ and $c_1$ by products of elementary matrices, $e$ and $f$, in such a way that
\[ ea_1 + fc_1 = u, \]
where $u$ is a unipotent matrix, in particular, $u \in A_1^\times$. We can now go back and choose $s$, where we note that we only need to define the first component, since the symmetry requirement, $s \in A^{\rm sym}$, fixes the second component
\[ s_{1} = - e^{-1}f \ \Longrightarrow \ r = a - sc. \]
With such a choice, $r_{1} = u \in A_1^\times$. Next, we need the second component of $r$ to be a unit, and for this we look at equation \eqref{*:rel:c:r}, which gives
\[ \varphi(r_{1})^t c_{2} = \varphi(c_{1})^t r_{2}. \]
And, incorporating \eqref{*:c:r} leads to
\begin{align*}
   A_2 c_{2} + A_2 r_{2} = A_2 
   \ &\Longrightarrow \ A_2 \varphi(r_{1})^t c_{2} + A_2 r_{2} = A_2 \\
   \ &\Longrightarrow \ A_2 \varphi(c_{1})^t r_{2} + A_2 r_{2} = A_2 \\
   \ &\Longrightarrow \ A_2 r_{2} = A_2 \ \Longrightarrow \ r_{2} \in A_2^\times.
\end{align*}
\end{proof}

With the two lemmas at hand, the proof of Theorem~\ref{thm:q-local} is complete. We observe that, in both cases, the $*$-Euclidean length is $1$.

\subsection{The ad\`eles}\label{adeles:def}
Let $F$ be a global field, i.e., either a number field or a function field, and let $\mathcal{O}_F$ be its ring of integers and $\mathbb{A}_F$ its ring of ad\`eles \cite{WeilNT}. Given a place or valuation $v$ of $F$, we let $F_v$ denote its completion. If $v$ is non-Archimedean, we let $\mathcal{O}_v$ be the corresponding ring of integers. 

We wish to study ${\rm SL}_*$ groups over the ad\`eles, in fact, over $A={\rm M}(n,\mathbb{A}_F)$. For this, we fix some notation. At every place $v$ of $F$, we write $A_v$ for ${\rm M}(n,F_v)$. There are finitely many infinite places of $F$, where we write $v \mid \infty$, and have two possibilities: $A_v = {\rm M}(n,\mathbb{R})$ or $A_v = {\rm M}(n,\mathbb{C})$. The condition $v \mid \infty$ being empty in the case of function fields. On the other hand, finite places of $F$ are in correspondence with nonzero prime ideals of the ring of integers $\mathcal{O}_F$:
\[ \mfp \ \longleftrightarrow \ v. \]
At every finite place $v$ of $F$, we write $O_v$ for the maximal compact open subgroup ${\rm M}(n,\mathcal{O}_v)$. We recall that the matrix ring of ad\`eles is a restricted direct product
\[ A = {\rm M}(n,\mathbb{A}_F) = \prod{} ' (A_v:O_v). \]
Let $S$ be a finite set of places containing all $v \mid \infty$, and let
\begin{equation*}
   A^S = \prod_{v \in S} A_v \times \prod_{v \notin S} O_v \subset A.
\end{equation*}
Given an element $a \in A$, there is an $S$ as above such that $a \in A^S$, and we write
\begin{equation*}
   a = (a_v) = a_S \cdot a^S,
\end{equation*}
where $a_S$ has coordinates $a_v \in A_v$ at every place $v \in S$ and is $1$ for $v \notin S$; and, $a^S$ has $1$ for coordinate at every $v \in S$ and $a_v \in O_v$ at every $v \notin S$.

\begin{theorem}\label{thm:adeles:F}
The ring of matrices $A={\rm M}(n,\mathbb{A}_F)$ is $*$-Euclidean.
\end{theorem}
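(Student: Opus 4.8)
The plan is to exploit the restricted-product structure of $\mathbb{A}_F$ and reduce the statement entirely to the local results already in hand. Since $F$ carries the trivial involution, so does $\mathbb{A}_F$, and the involution on $A = {\rm M}(n,\mathbb{A}_F)$ is simply the transpose; at each place $v$ it restricts to the transpose on ${\rm M}(n,F_v)$ (for $v\in S$) and on ${\rm M}(n,\mathcal{O}_v)$ (for $v\notin S$). A field with trivial involution is a $1$-local ring with involution, and a commutative local ring with trivial involution (such as $\mathcal{O}_v$) is again a $1$-local ring with involution, so Theorem~\ref{thm:q-local} tells us that ${\rm M}(n,F_v)$ and ${\rm M}(n,\mathcal{O}_v)$ are $*$-Euclidean, of length $1$ in each case.

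The first step is the observation that for any finite set $S$ of places containing all $v\mid\infty$, the ring
\[ A^S = \prod_{v\in S}{\rm M}(n,F_v)\ \times\ \prod_{v\notin S}{\rm M}(n,\mathcal{O}_v) \]
is a (typically infinite) direct product of involutive rings that are $*$-Euclidean of length $1$. A direct product $\prod_i B_i$ of rings with involution that are $*$-Euclidean of length $\le\ell$ is itself $*$-Euclidean of length $\le\ell$: the coprimality condition $\bigl(\prod_i B_i\bigr)(a_i) + \bigl(\prod_i B_i\bigr)(c_i) = \prod_i B_i$ and the symmetry condition $(a_i)^*(c_i)\in\bigl(\prod_i B_i\bigr)^{\rm sym}$ hold if and only if they hold in every factor; one then applies the $*$-Euclidean property coordinatewise and reassembles the resulting symmetric quotients and the terminal element, using that a tuple is a unit of the product exactly when each coordinate is a unit. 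Hence every $A^S$ is $*$-Euclidean of length $1$.

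Now take $a,c\in A$ with $Aa+Ac=A$ and $a^*c\in A^{\rm sym}$, and fix a B\'ezout identity $1 = xa+yc$ with $x,y\in A$. Choose $S\supseteq\{v\mid\infty\}$ finite and large enough that $a,c,x,y\in A^S$, which is possible because each of these four elements lies in $O_v$ for all but finitely many $v$. Then, working inside the subring $A^S$, we still have $A^Sa+A^Sc=A^S$ and $a^*c\in(A^S)^{\rm sym}$, so the previous step produces $s\in(A^S)^{\rm sym}$ and $r\in(A^S)^{\times}$ with $a=sc+r$. Since $(A^S)^{\rm sym}\subseteq A^{\rm sym}$ and $(A^S)^{\times}\subseteq A^{\times}$ — the latter because both $r$ and $r^{-1}$ lie in $A^S\subseteq A$ — this is a valid $*$-Euclidean decomposition of $(a,c)$ in $A$ of length $1$.

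The one point needing genuine care is the passage from the global hypotheses $Aa+Ac=A$ and $a^*c\in A^{\rm sym}$ to their counterparts over a single finite-level ring $A^S$: one must simultaneously absorb the ``denominators'' of $a$ and $c$ and those of the B\'ezout coefficients $x,y$ into $S$, and it is precisely here that the restricted-product (rather than full-product) nature of $\mathbb{A}_F$ is essential. Everything else is the routine bookkeeping of coordinatewise assembly, and in particular one sees that the $*$-Euclidean length of $A={\rm M}(n,\mathbb{A}_F)$ is again $1$.
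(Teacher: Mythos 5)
Your proof is correct and follows essentially the same route as the paper: absorb $a$ and $c$ into a finite-level ring $A^S$, apply the local $*$-Euclidean property of length $1$ from Theorem~\ref{thm:q-local} coordinatewise (to ${\rm M}(n,F_v)$ for $v\in S$ and to ${\rm M}(n,\mathcal{O}_v)$ for $v\notin S$), and reassemble $s$ and $r$. The one genuine improvement is that you explicitly enlarge $S$ to contain the B\'ezout coefficients $x,y$, which is exactly what is needed to guarantee the integral coprimality $O_v a_v + O_v c_v = O_v$ at the places outside $S$ — a point the paper's proof leaves implicit.
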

\begin{proof}
With notation as above, let $S$ be a finite set of places of $k$ such that
\begin{equation*}
   a, c \in A^S.
\end{equation*}
For $a_S, c_S \in \prod_{v \in S} A_v \hookrightarrow A$, we can go place by place where the local result, included in Theorem~\ref{thm:q-local}, is known for each of the fields $F_v$ by Soto-Andrade \cite{SA1978}, and there are only finitely many places $v \in S$. We thus have the $*$-Euclidean property with $s_S \in \prod_{v \in S} A_v^{\text{sym}}$ and $r_S \in \prod_{v \in S} A_v^\times$.

Now, at places $v \notin S$, Theorem~\ref{thm:q-local} gives the $*$-Euclidean property, where the decomposition length is $1$. Thus we can solve for the equation
\begin{equation*}
   a_v = s_v c_v + r_v,
\end{equation*}
with $s_v \in O_v^{\text{sym}}$ and $r_v \in O_v^\times$. In this way, we obtain $s^S \in \prod_{v \notin S} O_v \hookrightarrow A$ and $r^S \in \prod_{v \notin S} O_v^\times \hookrightarrow A$.

Finally, by setting
\begin{equation*}
   s = s_S \cdot s^S \in A \text{ and } r = r_S \cdot r^S \in A^\times, 
\end{equation*}
we obtain the desired $*$-Euclidean property for the ring of ad\`eles with decomposition length $1$.
\end{proof}

\subsection{Quadratic extensions.}\label{quad:ext} We now let $E/F$ be a separable quadratic field extension of the global field $F$, where we take the involution given by the non trivial Galois element $\alpha \in {\rm Gal}(E/F)$. For every finite absolute value $v$ of $F$, there are two possibilities, either $v$ remains inert with respect to $E$ or $v$ is split.
\begin{equation}\label{Valpo:fig}
\begin{tikzpicture}[baseline=(current bounding box.center)]
\begin{scope}
   \draw (-1,0) node {$\mathfrak{p}$};
   \draw[-,black] (-1,.25) -- (-1,1);
   \draw (-1,1.25) node {$\mathfrak{P}$};
   \draw (0,1.25) node {$\mathfrak{P}_1$};
   \draw (2,1.25) node {$\mathfrak{P}_2$};
   \draw[-,black] (1,0) -- (0,1);
   \draw (1,0) node [below] {$\mathfrak{p}$};
   \draw[-,black] (1,0) -- (2,1);
\end{scope}

   \draw[<->,thick,black] (3,.625) -- (4,.625);
   
\begin{scope}[shift={(6,0)}]
   \draw (-1,0) node {$v$};
   \draw[-,black] (-1,.25) -- (-1,1);
   \draw (-1,1.25) node {$w$};
   \draw (0,1.25) node {$w_1$};
   \draw (2,1.25) node {$w_2$};
   \draw[-,black] (1,0) -- (0,1);
   \draw (1,0) node [below] {$v$};
   \draw[-,black] (1,0) -- (2,1);
\end{scope}
\end{tikzpicture}
\end{equation}
In one case, $\mathfrak{P} = \mathfrak{p} \mathcal{O}_E$ is a prime ideal of $\mathcal{O}_E$ and we have corresponding prime ideals $\mfp_v$ of $\mathcal{O}_v = \mathcal{O}_{F_v}$ and $\mathfrak{P}_w$ of $\mathcal{O}_w = \mathcal{O}_{E_w}$, where we say there is one place $w$ above $v$, written $w \mid v$. In the other case $\mathfrak{P}_1 \mathfrak{P}_2 = \mfp \mathcal{O}_E$, and we obtain two places $w_1, w_2 \mid v$. Now, every infinite place $v$ of $F$, written $v \mid \infty$, leads towards two possibilities: one place $w \mid v$, when we must have $E_w/F_v = \mathbb{C}/\mathbb{R}$; or two places $w_1$, $w_2 \mid v$, when $E \otimes_F F_v \simeq \mathbb{R} \times \mathbb{R}$ or $E \otimes_F F_v \simeq \mathbb{C} \times \mathbb{C}$.

For every valuation $v$ of $F$, finite or infinite, we let
\[ E_v = E \otimes_F F_v \simeq \prod_{w \mid v} E_w. \]
We thus have that $E_v/F_v$ is a separable quadratic $F_v$-algebra with involution. When $E_v/F_v$ is a field extension, the involution for $E_v$ is given by the non-trivial Galois automorphism. When $E_v \simeq E_{w_1} \times E_{w_2}$, we have $E_{w_1} \simeq E_{w_2} \simeq F_v$, hence we fix $E_v$ to be the $F_v$-algebra $E_v = F_v \times F_v$ with the flip involution. By the $\check{\rm C}$ebotarev density theorem, each case of one or two places of $E$ above one for $F$ happens with density $1/2$. In the former case $v$ is inert, while in the latter $v$ is split.

We thus endow $\mathbb{A}_E$ with the involution $*$ obtained from the involution of the $F_v$-algebra $E_v$ at every place $v$ of $F$. The involution extends to the ad\`elic ring of matrices $A={\rm M}(n,\mathbb{A}_E)$, as in \eqref{nxn:invo}, giving $(A,*) \in \mathcal{A}_{\mathbb{A}_F}$.

Using $w$ to denote places of $E$ and $v$ for places of $F$, and writing $O_w = {\rm M}(n,\mathcal{O}_w)$ for finite $w$, the ring of matrices over the ad\`eles of $E$ is the restricted direct product as before
\[ A = {\rm M}(n,\mathbb{A}_E) = \prod_w{} ' (A_w:O_w). \]
However, in order to incorporate the involution, we group the places of $E$ according to the places of $F$, by setting for finite places
\[ R_v = \prod_{w \mid v} \mathcal{O}_w, \quad
   K_v = {\rm M}(n,R_v) \subset G_v= {\rm M}(n,E_v). \]
Then, we can rearrange the restricted direct product to obtain
\[ A = \prod_v{} ' (G_v:K_v). \]

\begin{theorem}\label{thm:adeles:quad}
Let $E/F$ be a quadratic extension of global fields. The ring of matrices $A={\rm M}(n,\mathbb{A}_E)$ is $*$-Euclidean of decomposition length 1.
\end{theorem}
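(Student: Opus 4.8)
The plan is to mimic the structure of the proof of Theorem~\ref{thm:adeles:F}, working place-by-place over the places $v$ of $F$ and using the restricted-direct-product decomposition $A = \prod_v{}' (G_v : K_v)$. Given $a, c \in A$ with $Aa + Ac = A$ and $a^*c \in A^{\rm sym}$, choose a finite set $S$ of places of $F$, containing all $v \mid \infty$, such that $a, c \in A^S = \prod_{v \in S} G_v \times \prod_{v \notin S} K_v$. The hypotheses restrict place-by-place: at each $v$ we get $G_v a_v + G_v c_v = G_v$ and $a_v^* c_v \in G_v^{\rm sym}$, and outside $S$ the same holds with $K_v$ in place of $G_v$.

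For the finitely many $v \in S$, one must solve the local $*$-Euclidean problem in $G_v = {\rm M}(n, E_v)$. Here $E_v$ is either a separable quadratic field extension of $F_v$ (the inert/ramified case) or the split $F_v$-algebra $F_v \times F_v$ with the flip involution. In the field-extension case, $G_v = {\rm M}(n, E_v)$ with the Galois-conjugate-transpose involution is the matrix ring over a $*$-local ring — in fact a $1$-local ring with involution — so Theorem~\ref{thm:q-local} applies directly and gives decomposition length $1$. In the split case, $E_v = F_v \times F_v$ is a product of two (equal) fields related by the identity anti-automorphism, so $G_v = {\rm M}(n, F_v) \oplus {\rm M}(n, F_v)$ with the flip-transpose involution, and Lemma~\ref{2:div:rings} (with $D_1 = D_2 = F_v$, $\varphi = {\rm id}$) gives $a_v = s_v c_v + r_v$ with $s_v \in G_v^{\rm sym}$, $r_v \in G_v^\times$. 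Collecting these yields $s_S \in \prod_{v \in S} G_v^{\rm sym}$ and $r_S \in \prod_{v \in S} G_v^\times$ solving the equation on the $S$-part.

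For the cofinitely many $v \notin S$, the local ring is $K_v = {\rm M}(n, R_v)$, where $R_v = \prod_{w \mid v} \mathcal{O}_w$ is either $\mathcal{O}_w$ for a single $w \mid v$ (a local ring, hence trivially a $1$-local ring with involution via the Galois automorphism) or $\mathcal{O}_w \times \mathcal{O}_w$ with the flip involution (a $2$-local ring with involution). Either way $K_v$ is the matrix ring over a $*$-local ring, so Theorem~\ref{thm:q-local} applies and, since the $*$-Euclidean length there is $1$, we can solve $a_v = s_v c_v + r_v$ with $s_v \in K_v^{\rm sym}$ and $r_v \in K_v^\times$. This produces $s^S \in \prod_{v \notin S} K_v$ and $r^S \in \prod_{v \notin S} K_v^\times$. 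Setting $s = s_S \cdot s^S \in A^{\rm sym}$ and $r = r_S \cdot r^S \in A^\times$ gives $a = sc + r$, establishing the $*$-Euclidean property with decomposition length $1$.

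The main obstacle is essentially bookkeeping rather than a deep difficulty: one has to verify that the local involutive ring at each place really does fall under the hypotheses of Theorem~\ref{thm:q-local} or Lemma~\ref{2:div:rings}, in particular checking that $R_v$ is a $*$-local ring in all of the split, inert, Archimedean and non-Archimedean sub-cases, and that the involution on $G_v$ and $K_v$ coming from $E_v$ is exactly the one treated in those results (the Galois-conjugate-transpose in the field case, the flip-transpose in the split case). One should also confirm that the decomposition length being uniformly $1$ is what makes the restricted product assembly work — at each $v \notin S$ the single step $a_v = s_v c_v + r_v$ with $r_v$ a unit suffices, so there is no compatibility issue across places. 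Once these local identifications are in place, the argument is a routine repetition of the proof of Theorem~\ref{thm:adeles:F}.
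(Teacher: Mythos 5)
Your proposal is correct and follows essentially the same route as the paper: decompose $a,c$ along the restricted product grouped by places $v$ of $F$, handle the finitely many $v\in S$ and the cofinitely many $v\notin S$ separately, and invoke Theorem~\ref{thm:q-local} (via Lemmas~\ref{lem:local:A/J:A} and~\ref{2:div:rings}) in the inert and split sub-cases. The paper's own proof merely says to repeat the argument of Theorem~\ref{thm:adeles:F} after regrouping; you have filled in the local identifications it leaves implicit, and they check out.
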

\begin{proof}
Let $a$, $c \in A$ be such that $Aa+Ac=A$ and $a^*c=c^*a$. There exists a finite set of places $S$ of $F$, which includes all $v \mid \infty$, such that
\begin{equation*}
   a, c \in G^S = \prod_{v \in S} G_v \times \prod_{v \notin S} K_v \subset A.
\end{equation*}
We write
\begin{equation*}
   a = a_S \cdot a^S, \quad c = c_S \cdot c^S,
\end{equation*}
similar to what we did before, however, we are now grouping the places $w$ of $E$ that lie above each place $v$ of $F$. With these observations, we then follow the same argument used to prove Theorem~\ref{thm:adeles:F} in order to prove the result.
\end{proof}

\subsection{Quaternions.}\label{quat} We work over a local or global field, where the basic theory is expounded in \cite{WeilNT} for central simple algebras in a manner that is independent of the characteristic, and is detailed in \cite{Vi} for quaternion algebras. We do, however, encounter differences with regards to the $*$-Euclidean property depending on the characteristic being $2$ or not, see Theorem~\ref{thm:adeles:H}.

Given a local or a global field $F$, we let $(\mathbb{H},*) \in \mathcal{A}_F$ be a quaternion algebra over $F$. Up to isomorphism, there are two options for $\mathbb{H}$: either it is a division ring over $F$ or it is the ring of matrices ${\rm M}(2,F)$. Both options are possible except when $F=\C$, where the only quaternion algebra is $\mathbb{H}={\rm M}(2,\C)$. The case of a matrix algebra over $F$ in general is called the split quaternion algebra, where we take the involution to be
\begin{equation}\label{eq:H:split:inv}
   h^* = J h^t J^{-1}, \ h \in \mathbb{H} = {\rm M}(2,F).
\end{equation}
The division ring case is called the non-split quaternion algebra over $F$.

If $F$ is a non-Archimedean local field, we denote its ring of integers by $\mathcal{O}$, and we denote by $(\mathcal{Q},*) \in \mathcal{A}_{\mathcal{O}}$ the quaternionic ring of $\mathbb{H}$. The ring $\mathcal{Q}$ is ${\rm M}(2,\mathcal{O})$ with the involution given by \eqref{eq:H:split:inv} when $\mathbb{H}$ is split, and it is a non-commutative local ring with involution when $\mathbb{H}$ is a division ring. In fact, in the non-split case the ring $\mathcal{Q}$ is locally profinite much like the $\mathfrak{p}$-adic integers $\mathcal{O}$.

If $F$ is a global field, at every place $v$ of $F$ we let
\[ \mathbb{H}_v = \mathbb{H} \otimes_F F_v, \]
where indeed, each $\mathbb{H}_v$ is an $F_v$-quaternion algebra. If $v$ is a finite place of the global field $F$, we then write $F_v$ for the resulting non-Archimedean local field with ring of integers $\mathcal{O}_v$; furthermore, we denote by $\mathcal{Q}_v$ the quaternionic ring of $\mathbb{H}_v$. The ring $\mathbb{H}_v$ is split at almost every place, Chapter XI of \cite{WeilNT}.

Writing $\mathbb{A}_F$ for the ring of ad\`eles of a global field $F$, we have the ad\`elic quaternions
\[ \mathbb{A}_{\mathbb{H}} = \mathbb{H} \otimes_F \mathbb{A}_F. \]
They can equivalently be seen as a restricted direct product
\[ \mathbb{A}_{\mathbb{H}} = \prod_v{} ' (\mathbb{H}_v:\mathcal{Q}_v), \]
We have that $(\mathbb{A}_{\mathbb{H}},*) \in \mathcal{A}_{\mathbb{A}_F}$ with the involution obtained from that of $\mathbb{H}_v$ at every place, i.e.,
\[ a^* = (a_v^*), \quad a = (a_v) \in \mathbb{A}_{\mathbb{H}}. \]

Before continuing to inspect $*$-Euclideanity for these rings, we record two lemmas that arise in the local setting and already mark a difference between working in characteristic $2$ or not. For this, we extend the involution given by \eqref{eq:H:split:inv} to ${\rm M}(2,R)$ over any ring with identity $R$. However, $a^*a$ does not define a quaternion norm for general $R$ as in the commutative case.

\begin{lemma}\label{lem:n2:n*E}
Let $R$ be such that $2$ is a regular element and let $Q = {\rm M}(2,R)$, so that $(Q,*) \in \mathcal{A}_R$ with the involution obtained from \eqref{eq:H:split:inv}. Then $Q$ is not $*$-Euclidean.
\end{lemma}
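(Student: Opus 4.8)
**Proof proposal for Lemma~\ref{lem:n2:n*E}.**

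The plan is to exhibit an explicit pair $(a,c)$ that is "coprime" in the required sense — that is, $Qa+Qc=Q$ and $a^*c\in Q^{\mathrm{sym}}$ — but whose $*$-Euclidean algorithm can never terminate in a unit. The natural choice is $c = 1$ (the identity of $Q = {\rm M}(2,R)$) together with some $a$ that is symmetric under the involution \eqref{eq:H:split:inv}, so that $a^*c = a \in Q^{\mathrm{sym}}$ automatically, and $Qa + Q\cdot 1 = Q$ trivially. The point of the choice $c=1$ is that the division step $r_{-1} = a = s_0 r_0 + r_1 = s_0 + r_1$ forces $r_1 = a - s_0$ with $s_0 \in Q^{\mathrm{sym}}$, and then $r_1 = r_0\cdot(\text{something})$ with $r_0 = 1$, so the whole process collapses: after one step we are trying to write $r_0 = 1 = s_1 r_1 + r_2$, and the relevant invariant must be tracked through the symmetric elements $s_i$.

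First I would identify $Q^{\mathrm{sym}}$ explicitly for the involution $h^* = JhJ^{-1}$ with $J = \left(\begin{smallmatrix} 0 & 1 \\ -1 & 0\end{smallmatrix}\right)$: a direct computation shows that for $h = \left(\begin{smallmatrix} \alpha & \beta \\ \gamma & \delta\end{smallmatrix}\right)$ one has $h^* = \left(\begin{smallmatrix} \delta & -\beta \\ -\gamma & \alpha\end{smallmatrix}\right)$, so $Q^{\mathrm{sym}}$ consists of the matrices with $\alpha = \delta$ and $\beta = \gamma = 0$, i.e.\ the scalar matrices $\lambda\cdot I$ with $\lambda \in R$ (using that $\beta = -\beta$ forces $2\beta = 0$, hence $\beta = 0$ since $2$ is regular — this is exactly where the hypothesis enters). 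Thus $Q^{\mathrm{sym}} = R\cdot I$. Now take $c = r_0 = I$ and choose $a = r_{-1}$ to be a symmetric unit that is \emph{not} congruent to a scalar in any way the algorithm can exploit — in fact any nonscalar invertible $a$ will not even be symmetric, so instead I would run the algorithm formally and observe that at each stage the $s_i$ are scalar matrices, hence all of $r_{-1}, r_0, r_1, \ldots$ lie in the commutative subring $R[a] \cong R\cdot I + R\cdot a$; if $a$ is chosen so that $R[a]$ contains no units of $Q$ beyond those of $R\cdot I + \ldots$, the remainder $r_n$ can never be a unit of $Q$ unless it is already one at an earlier non-unit step. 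More carefully: take $a = \left(\begin{smallmatrix} 0 & 1 \\ 0 & 0\end{smallmatrix}\right)$, which is \emph{not} symmetric, so one must be slightly cleverer — I would instead pick $a$ symmetric, say impossible unless scalar, so the cleanest route is $c = \left(\begin{smallmatrix} 1 & 0 \\ 0 & 0\end{smallmatrix}\right)$, $a = \left(\begin{smallmatrix} 0 & 0 \\ 0 & 1\end{smallmatrix}\right)$: then $a^*c = \left(\begin{smallmatrix} 1 & 0 \\ 0 & 0\end{smallmatrix}\right)\left(\begin{smallmatrix}1 & 0\\0&0\end{smallmatrix}\right)$ — wait, one must recompute $a^* $ under $JhJ^{-1}$, giving $a^* = \left(\begin{smallmatrix}1 & 0\\0&0\end{smallmatrix}\right)$ and $c^* = \left(\begin{smallmatrix}0&0\\0&1\end{smallmatrix}\right)$, so $a^*c = 0 \in Q^{\mathrm{sym}}$ and $Qa + Qc = Q$. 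Since every $s_i \in Q^{\mathrm{sym}} = R\cdot I$, an easy induction shows every $r_i$ is diagonal with a fixed "support pattern" complementary to that of the running partial remainders, and in particular every $r_i$ has a zero on the diagonal, hence $\det r_i$ is a zero-divisor or zero, so $r_i \notin Q^\times$ for all $i$. This contradicts the requirement $r_n \in Q^\times$, proving $Q$ is not $*$-Euclidean.

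The main obstacle, and the step requiring the most care, is pinning down the induction that constrains the remainders: one must verify that when all the "quotients" $s_i$ are forced to be central scalars, the remainders stay trapped in a subset of $Q$ that avoids $Q^\times$ entirely, and that this is robust regardless of how the $s_i \in R$ are chosen (including the zero-divisor subtleties when $R$ is not a domain). The identification $Q^{\mathrm{sym}} = R\cdot I$ is the linchpin, and it is precisely here that "$2$ regular" is indispensable — over a ring of characteristic $2$ the symmetric elements are far larger (they include all of $R\cdot I$ plus the "anti-diagonal" perturbations), which is exactly why the characteristic-$2$ case, treated separately in Lemma~\ref{H:char2}, behaves differently and can be $*$-Euclidean. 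I would close by remarking that the same pair $(a,c)$ works uniformly for all such $R$, so the failure of $*$-Euclideanity is not an artifact of any particular choice of coefficient ring.
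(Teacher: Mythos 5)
Your identification of the symmetric elements is correct and is exactly the paper's first step: since $2$ is regular, $h^*=h$ forces the off-diagonal entries to vanish and the diagonal entries to agree, so $Q^{\rm sym}=R\cdot I_2$. But the counterexample pair you settle on does not work, for two independent reasons. First, with $a=\left(\begin{smallmatrix}0&0\\0&1\end{smallmatrix}\right)$ and $c=\left(\begin{smallmatrix}1&0\\0&0\end{smallmatrix}\right)$ you have $a^*=\left(\begin{smallmatrix}1&0\\0&0\end{smallmatrix}\right)$ and hence $a^*c=\left(\begin{smallmatrix}1&0\\0&0\end{smallmatrix}\right)\neq 0$; this is \emph{not} symmetric (it is not a scalar matrix), so the pair does not satisfy the hypothesis $a^*c\in Q^{\rm sym}$ and therefore says nothing about $*$-Euclideanity. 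Second, even ignoring that, your induction claim ("every $r_i$ has a zero on the diagonal") is false: taking $s_0=-I_2\in Q^{\rm sym}$ gives $r_1=a-s_0c=a+c=I_2\in Q^\times$, so the division process for this pair terminates successfully at the very first step. The set of left-$R$-combinations of $E_{11}$ and $E_{22}$ is the full diagonal subring, which is riddled with units, so no invariant of the kind you want is preserved.

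The missing idea is to choose $a$ and $c$ so that the left $R$-span of $\{a,c\}$ avoids $Q^\times$ entirely; since every quotient $s_i$ is a scalar matrix, all remainders $r_i$ stay in that span. The paper takes $a=\left(\begin{smallmatrix}1&0\\1&0\end{smallmatrix}\right)$ and $c=\left(\begin{smallmatrix}0&1\\0&1\end{smallmatrix}\right)$: then $a^*c=c^*a=0\in Q^{\rm sym}$, $Qa+Qc=Q$, and every matrix in the span has the form $\left(\begin{smallmatrix}\lambda&\mu\\\lambda&\mu\end{smallmatrix}\right)$, which is left-annihilated by the nonzero row vector $(1,-1)$ and hence can never be invertible. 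Your first instinct (take $c=1$) also cannot work, since $r_0=1$ is already a unit. I recommend keeping your computation of $Q^{\rm sym}$ and replacing the pair and the induction with the "common left annihilator" argument.
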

\begin{proof}
Because of the hypothesis on $R$, we have
\[ Q^{\rm sym} = \left\{ \alpha I_2 \mid \alpha \in R \right\}. \]
The elements
\[ a = \left( \begin{array}{rr} 1&0 \\ 1&0 \end{array} \right), \  
   c = \left( \begin{array}{rr} 0&1 \\ 0&1 \end{array} \right) \in Q, \]
are such that
\[ Q a + Q c = Q \text{ and } 
   a^* c=c^* a = 0. \]
However, $a$ and $c$ cannot satisfy the $*$-Euclidean property.
\end{proof}

When the underlying ring is a division algebra $D$, we recall that the Dieudonn\'e determinant on ${\rm M}(2,D)$ is given by
\[ \det \left( \begin{array}{rr} \alpha & \beta \\ \gamma & \delta \end{array} \right) =
   \left\{ \begin{array}{cl} 	
      \alpha\delta & \text{if } \gamma = 0 \\
      \alpha\gamma\delta\gamma^{-1} - \gamma\beta & \text{if } \gamma \neq 0
   \end{array} \right. . \]

\begin{lemma}\label{H:char2}
Let $D$ be a division ring of characteristic $2$ and let $H = {\rm M}(2,D)$, so that $(H,*) \in \mathcal{A}_D$ with the involution obtained from \eqref{eq:H:split:inv}. If $a$, $c \in H$ are such that
\[ H a + H c = H, \]
then there exists an $s \in H^{\rm sym}$ such that
\[ r = a + sc \in H^\times. \]
\end{lemma}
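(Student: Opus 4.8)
The plan is to first pin down the symmetric elements of $H$ and then run a short case analysis on the $D$-rank of $c$. The involution \eqref{eq:H:split:inv} sends a $2\times 2$ matrix to its adjugate, $\left(\begin{smallmatrix}\alpha&\beta\\\gamma&\delta\end{smallmatrix}\right)^{*}=\left(\begin{smallmatrix}\delta&-\beta\\-\gamma&\alpha\end{smallmatrix}\right)$, so once the signs vanish in characteristic $2$ a matrix is symmetric exactly when its two diagonal entries coincide: $H^{\rm sym}=\bigl\{\left(\begin{smallmatrix}\alpha&\beta\\\gamma&\alpha\end{smallmatrix}\right):\alpha,\beta,\gamma\in D\bigr\}$, a three-parameter family --- in sharp contrast with the one-parameter $Q^{\rm sym}=\{\alpha I_2\}$ of Lemma~\ref{lem:n2:n*E}. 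It is precisely this abundance of symmetric elements that makes the statement true only in characteristic $2$, and I would isolate it first. I would also record two elementary facts: the conclusion does not involve $s$ on the ideal side, so it is unchanged if $(a,c)$ is replaced by $(af,cf)$ with $f\in H^{\times}$ (and $Ha+Hc=H$ is preserved), which lets me normalize $c$; and a matrix of $H$ is invertible iff its two rows are left-$D$-linearly independent, equivalently its Dieudonn\'e determinant is nonzero.

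Then I would split on $\operatorname{rank}_D(c)$. If it is $0$, then $c=0$ and $Ha+Hc=H$ forces $a\in H^{\times}$, so $s=0$ works. If it is $2$, then $c\in H^{\times}$; replacing $(a,c)$ by $(ac^{-1},I_2)$, the task becomes to find $s\in H^{\rm sym}$ with $a+s\in H^{\times}$ for an arbitrary $a$. Using the equal-diagonal freedom I would choose the diagonal entry and one off-diagonal entry of $s$ so that the first column of $a+s$ becomes $(1,1)^{t}$; the matrix is then invertible iff its two rows differ, which I arrange by taking the remaining entry of $s$ outside the single value of $D$ that would make them equal --- possible since $\abs{D}\ge 2$. (The coprimality hypothesis is not needed in this case.)

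The substantive case is $\operatorname{rank}_D(c)=1$. Here I write $c=xy$ with $x$ a nonzero column in $D^{2\times 1}$ and $y$ a nonzero row in $D^{1\times 2}$. A direct linear computation shows that for a fixed nonzero $x$ the column $sx$ sweeps out all of $D^{2\times 1}$ as $s$ runs over $H^{\rm sym}$; consequently $\{\,sc:s\in H^{\rm sym}\,\}$ equals the set of all matrices both of whose rows lie in $Dy$, and it suffices to produce a column $z$ with $a+zy\in H^{\times}$. Now $Ha+Hc=H$ says exactly that the rows of $a$ together with $y$ span $D^{1\times 2}$, so some row $p$ of $a$ lies outside $Dy$. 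Using $\{p,y\}$ as a $D$-basis, writing the other row of $a$ as $\rho p+\tau y$, and recalling that the $i$th row of $a+zy$ is $(\text{$i$th row of }a)+z_{i}y$, I would take $z$ to vanish in the position of the row $p$ and to equal any $z'\neq\tau$ in the other position; then the two rows of $a+zy$ are $p$ and $\rho p+(\tau+z')y$, which are left-$D$-independent. Such $z$ exists because $\abs{D}\ge 2$, and lifting it to a symmetric $s$ via the surjectivity of $s\mapsto sx$ completes the rank-$1$ case, and hence the lemma.

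The main obstacle is this last case, and its real difficulty is uniformity in $D$: since $D$ may be a small finite division ring --- even $\mathbb{F}_2$ --- one cannot fall back on the familiar ``a nonzero polynomial misses some point of an infinite field'' argument, so each choice must come down to avoiding a single element of a set of size at least $2$, and it is exactly the coprimality hypothesis $Ha+Hc=H$ that furnishes enough room (drop it and the statement fails at once, e.g.\ $c=0$ with $a$ singular). A secondary, bookkeeping-level matter is to check that the two normalizations --- right multiplication by units, and the identification of $Hc$ with the matrices whose rows lie in $Dy$ --- interact correctly with the symmetry requirement on $s$ and with the hypothesis.
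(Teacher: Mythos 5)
Your proof is correct, and it is in fact more complete than the paper's own argument at the points where care is needed. Both proofs proceed by a case analysis on ranks and both exploit the fact that in characteristic $2$ the symmetric elements form the three-parameter family of equal-diagonal matrices, but the execution differs. The paper normalizes $a$ by a unit on the right so that its second column vanishes, and then, for each rank of $c$, hand-picks $s$ among triangular and anti-triangular symmetric shapes so that $sc$ acquires a convenient (anti-)triangular support; the actual choices are only sketched, and the paper itself flags that the case $D=\mathbb{F}_2$ needs a separate careful check of three subcases. You instead normalize $c$: reducing to $c=I_2$ when $c$ is invertible, and factoring $c=xy$ as an outer product when ${\rm rank}(c)=1$. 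Your observation that $s\mapsto sx$ maps $H^{\rm sym}$ onto $D^{2\times 1}$ for every nonzero column $x$ (checked correctly in all three subcases for the support of $x$) turns the rank-one case into a transparent basis argument in which each choice amounts to avoiding a single element of $D$, so the bound $\lvert D\rvert\ge 2$ handles $\mathbb{F}_2$ uniformly rather than as an exception; the coprimality hypothesis enters exactly where you say, to produce a row of $a$ outside $Dy$. This is a clean, fully rigorous substitute for the paper's sketch, and I would keep your version of the rank-one case.
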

\begin{proof}
If $a$ is invertible or zero, the Lemma is immediate. Hence, we assume that ${\rm rank}(a)=1$. For any unit $u_0 \in H^\times$ and $s \in H^{\rm sym}$, we observe that
\[ r = a + sc \in H^\times \ \Longleftrightarrow \ 
   ru_0 = au_0 + scu_0 \in Q^\times. \]
Thus, after taking $u_0$ to be a suitable product of elementary matrices, we can assume $a$ is of the form
\begin{equation}\label{eq:a:r1}
   a = \left( \begin{array}{rr} x & 0 \\ y & 0 \end{array} \right).
\end{equation}
Also, we must have ${\rm rank}(c) = 1$ or $2$.

Assume $c$ is invertible. When $D = F$ is a field, this case is easy because $c^*c \in F$ is the quaternion norm. For $D$ in general, when $c$ is invertible, one can take $s \in H^{\rm sym}$ to be of the form
\begin{equation}\label{eq:sc:triang}
   \left( \begin{array}{rr} \alpha & \beta \\ 0 & \alpha \end{array} \right), \
   \left( \begin{array}{rr} \alpha & 0 \\ \gamma & \alpha \end{array} \right) \text{ or }
   \left( \begin{array}{rr} 0 & \alpha \\ \alpha & 0 \end{array} \right),
\end{equation}
to get $sc$ in triangular or anti-triangular form with non-zero diagonal or anti-diagonal entries, respectively. Depending on the form of $a$, one can choose $\alpha$, $\beta$ and $\gamma$ so that
\[ r = a + sc \in H^\times. \]
Note that a careful consideration of the three cases for $a$ of the form $\eqref{eq:a:r1}$ is needed when $D = \F_2$.

Now, suppose ${\rm rank}(c)=1$, then one can choose an appropriate $s \in A^\times$ of one of the forms in \eqref{eq:sc:triang}, so that
\begin{equation*}
   sc = \left( \begin{array}{rr} x & y\\ 0 & 0 \end{array} \right) \text{ or }
   \left( \begin{array}{rr} 0 & 0 \\ x & y \end{array} \right).
\end{equation*}
We can take one or the other, depending on the form of $a$, to obtain
\[ a = sc + r, \ s \in H^{\rm sym}, \ r \in H^\times. \]
\end{proof}

We now study the ring $A$ of $n \times n$ matrices with entries in $\mathbb{A}_{\mathbb{H}}$. The proof of the next theorem gives another example of how a global question requires us to inspect what is happening locally in detail, where we prove a pair of lemmas along the way. Note that the involution on $A$, globally or locally, is obtained by combining the involution on the quaternion ring with the involution given by \eqref{nxn:invo}.

\begin{theorem}\label{thm:adeles:H}
Let $\mathbb{H}$ be a quaternion algebra over a global field $F$, and let $A = {\rm M}(n,\mathbb{A}_{\mathbb{H}})$ so that $(A,*) \in \mathcal{A}_{\mathbb{A}_{\mathbb{H}}}$ with the involution induced from that of $\mathbb{A}_{\mathbb{H}}$. Then
\begin{itemize}
   \item[(i)] $A$ is $*$-Euclidean of decomposition length $1$ when ${\rm char}(F)=2$.
   \item[(ii)] $A$ is not $*$-Euclidean when ${\rm char}(F) \neq 2$.
\end{itemize}
\end{theorem}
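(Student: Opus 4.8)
\emph{Proof plan.} The two parts pull in opposite directions: (i) is an ad\`elic glueing argument that reduces everything to local $*$-Euclidean statements already in hand, the characteristic-$2$ hypothesis entering only at the split places; (ii) amounts to exhibiting a single pair $(a,c)$ admitting no finite decomposition, and the obstruction is purely local, coming from a split place where Lemma~\ref{lem:n2:n*E} applies.

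For (i) the plan is to copy the proof of Theorem~\ref{thm:adeles:F}. Writing $\mathbb{A}_{\mathbb{H}}=\prod_v{}'(\mathbb{H}_v:\mathcal{Q}_v)$, so that $A=\prod_v{}'\bigl({\rm M}(n,\mathbb{H}_v):{\rm M}(n,\mathcal{Q}_v)\bigr)$, and given $a,c\in A$ with $Aa+Ac=A$ and $a^*c=c^*a$, I would pick a finite set $S$ of places containing all $v\mid\infty$, large enough that $a,c\in\prod_{v\in S}{\rm M}(n,\mathbb{H}_v)\times\prod_{v\notin S}{\rm M}(n,\mathcal{Q}_v)$ and that, after absorbing finitely many more places, ${\rm M}(n,\mathcal{Q}_v)a_v+{\rm M}(n,\mathcal{Q}_v)c_v={\rm M}(n,\mathcal{Q}_v)$ for $v\notin S$ (an identity $xa+yc=1$ in $A$ has $x_v,y_v$ integral off a finite set). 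For $v\in S$ I solve $a_v=s_vc_v+r_v$ with $s_v$ symmetric and $r_v\in{\rm M}(n,\mathbb{H}_v)^\times$: if $\mathbb{H}_v$ is the non-split (division) quaternion algebra this is Theorem~\ref{thm:q-local}, since a division ring with involution is a local ring with involution (equivalently, the Co-prime Lemma of \cite{PaSA2003}); if $\mathbb{H}_v$ is split, ${\rm M}(n,\mathbb{H}_v)\cong{\rm M}(n,{\rm M}(2,F_v))$ and the length-$1$ step is Lemma~\ref{lem:char2:split}, the $n\times n$ upgrade of the characteristic-$2$ step Lemma~\ref{H:char2}. For $v\notin S$ I do the same with $\mathcal{Q}_v$: when non-split, $\mathcal{Q}_v$ is a non-commutative local ring with involution so Theorem~\ref{thm:q-local} applies to ${\rm M}(n,\mathcal{Q}_v)$; when $\mathcal{Q}_v={\rm M}(2,\mathcal{O}_v)$ is split the integral statement is Lemma~\ref{lem:char2:split:localring} (note that ${\rm M}(2,\mathcal{O}_v)$ has non-two-sided maximal left ideals, so it is not itself a $*$-local ring and Theorem~\ref{thm:q-local} does not apply directly). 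Assembling $s=(s_v)\in A^{\rm sym}$ and $r=(r_v)\in A^\times$ gives $a=sc+r$, a decomposition of length $1$, just as in Theorems~\ref{thm:adeles:F} and \ref{thm:adeles:quad}.

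For (ii) it suffices to produce one bad pair. Fix a place $v_0$ at which $\mathbb{H}$ splits; such places exist (almost all do, Chapter~XI of \cite{WeilNT}), $F_{v_0}$ is a local field with ${\rm char}(F_{v_0})\neq 2$ so $2\in F_{v_0}^\times$, and ${\rm M}(n,\mathbb{H}_{v_0})\cong{\rm M}(n,{\rm M}(2,F_{v_0}))$ with the involution combining \eqref{nxn:invo} and \eqref{eq:H:split:inv}. The $v_0$-projection $\pi_{v_0}:A\to{\rm M}(n,\mathbb{H}_{v_0})$ is a ring map compatible with the involutions and sending units to units; hence if $(a_0,c_0)$ is a coprime pair in ${\rm M}(n,\mathbb{H}_{v_0})$ with $a_0^*c_0=c_0^*a_0$ and \emph{no} $*$-Euclidean decomposition, then the pair $a,c\in A$ with $v_0$-component $(a_0,c_0)$ and all other components $(1,0)$ is coprime in $A$, satisfies $a^*c=c^*a$, and admits none either (a decomposition of $(a,c)$ projects under $\pi_{v_0}$ to one of $(a_0,c_0)$). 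So (ii) reduces to: \emph{${\rm M}(n,{\rm M}(2,F_{v_0}))$ is not $*$-Euclidean when ${\rm char}(F_{v_0})\neq 2$}. For $n=1$ this is exactly Lemma~\ref{lem:n2:n*E} with $R=F_{v_0}$. For general $n$ I would use the tensor-swap isomorphism ${\rm M}(n,{\rm M}(2,F_{v_0}))\cong{\rm M}(2,{\rm M}(n,F_{v_0}))$; under it the involution becomes \eqref{eq:H:split:inv} twisted by the transpose anti-automorphism $\sigma$ of ${\rm M}(n,F_{v_0})$, so the symmetric elements are the block matrices $\bigl(\begin{smallmatrix}p&q\\ r&\sigma(p)\end{smallmatrix}\bigr)$ with $q,r$ antisymmetric. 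On the coprime pair $a_0=\bigl(\begin{smallmatrix}I_n&0\\ I_n&0\end{smallmatrix}\bigr)$, $c_0=\bigl(\begin{smallmatrix}0&I_n\\ 0&I_n\end{smallmatrix}\bigr)$ (for which $a_0^*c_0=c_0^*a_0=0$) one runs the rank argument of Lemma~\ref{lem:n2:n*E}: clearing the first block-column shows a length-one remainder $a_0-s_0c_0$ is equivalent to a block-triangular matrix whose corner block $(p-\sigma(p))+(q-r)$ is antisymmetric, hence of even rank, so cannot be invertible (at least when $n$ is odd), and an induction of the same type should keep every remainder rank-deficient.

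The step I expect to be the main obstacle is precisely this general-$n$ case of (ii). When $n=1$, $2\in F_{v_0}^\times$ forces the symmetric elements to be the scalars $\alpha I_2$, and Lemma~\ref{lem:n2:n*E} trades entirely on that rigidity: its ``equal rows'' invariant survives because the multipliers are scalar. For $n\geq 2$ the symmetric elements are much larger --- their off-diagonal blocks are unconstrained --- and, since the symmetric multipliers then generate the whole matrix ring under multiplication, no left-ideal-type invariant can separate a \emph{coprime} pair; the argument must genuinely use the interplay of coprimality, the symmetry of $a_0^*c_0$, and the parity constraint coming from antisymmetry, and for $n$ even one cannot conclude already at the first remainder and must either sharpen the invariant or choose the starting pair more carefully. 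Making this robust for every $n$ is the technical crux; everything else is bookkeeping on top of the local lemmas and Theorem~\ref{thm:q-local}.
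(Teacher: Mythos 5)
Your part (i) is correct and is essentially the paper's own argument: glue over the places, using Theorem~\ref{thm:q-local} at the non-split places (where $\mathbb{H}_v$, resp.\ $\mathcal{Q}_v$, is a division algebra, resp.\ a local ring with involution) and Lemmas~\ref{lem:char2:split} and \ref{lem:char2:split:localring} at the split places; your remark that $S$ must be enlarged so that the B\'ezout coefficients become integral off $S$ is left implicit in the paper but is exactly the right bookkeeping, and the length is $1$ as in Theorem~\ref{thm:adeles:F}.

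For part (ii) your reduction to a single split place $v_0$ (bad pair at $v_0$, components $(1,0)$ elsewhere) is also what the paper does. The step you flag as the crux is a genuine gap, and it is worth saying that the paper does not fill it either: it proves the local $n=1$ case via Lemma~\ref{lem:n2:n*E} and simply asserts that this ``directly extends'' to $n>1$. Your suspicion about that assertion is well founded, and your own attempt does not close it: for the pair $a_0=\bigl(\begin{smallmatrix}I_n&0\\I_n&0\end{smallmatrix}\bigr)$, $c_0=\bigl(\begin{smallmatrix}0&I_n\\0&I_n\end{smallmatrix}\bigr)$ and the symmetric element $s=\bigl(\begin{smallmatrix}0&q\\0&0\end{smallmatrix}\bigr)$ with $q$ an invertible antisymmetric $n\times n$ matrix (these exist for $n$ even), one gets $a_0-sc_0=\bigl(\begin{smallmatrix}I&-q\\I&0\end{smallmatrix}\bigr)\in{\rm GL}$, so that pair admits a length-one decomposition and cannot be the counterexample when $n$ is even. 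The choice of pair, not just the invariant, has to change.

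Here is a way to repair it for every $n$. Identify ${\rm M}(n,\mathbb{H}_{v_0})$ with ${\rm End}(V)$, $V=F_{v_0}^{2n}$, the involution being the adjoint of a nondegenerate alternating form $\langle\cdot,\cdot\rangle$. Given a chain $r_{-1}=a$, $r_0=c$, $r_{i+1}=r_{i-1}-s_ir_i$, the coprimality and symmetry conditions propagate to each consecutive pair, and say exactly that $L_i=\{(r_{i-1}v,r_iv):v\in V\}$ is a $2n$-dimensional totally isotropic subspace for the \emph{symmetric} split form $h\bigl((x_1,x_2),(y_1,y_2)\bigr)=\langle x_1,y_2\rangle-\langle x_2,y_1\rangle$ on $V\oplus V$; moreover $L_{i+1}=g_iL_i$ with $g_i(x_1,x_2)=(x_2,x_1-s_ix_2)$, a similitude of $h$ with multiplier $-1$ and determinant $(-1)^{2n}=1$, hence a proper similitude. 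In characteristic $\neq 2$ proper similitudes preserve each of the two families of maximal isotropic subspaces of a split form, so the parity of $\dim\bigl(L_i\cap(V\oplus 0)\bigr)=\dim\ker r_i$ is constant along the chain; a terminal unit $r_N$ forces $\dim\ker c$ to be even. Taking $a=a_0'\oplus I_{2n-2}$ and $c=c_0'\oplus 0_{2n-2}$ with $(a_0',c_0')$ the $2\times2$ pair of Lemma~\ref{lem:n2:n*E} gives a coprime pair with $a^*c=0$ and $\dim\ker c=2n-1$ odd, hence no decomposition of any length. (In characteristic $2$ the form $h$ is alternating, there is a single family of Lagrangians, and the obstruction vanishes, consistent with part (i).) Your antisymmetric-rank observation for odd $n$ is a shadow of this parity invariant; without an argument of this kind the general-$n$ local statement, and with it part (ii), remains unproved in both your proposal and the paper as written.
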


Let $a = (a_v)$, $c = (c_v) \in A$ be such that
\[ A a+ A c = A \text{ and } a^*c=c^*a. \]
At finite non-split places, $\mathcal{Q}_v$ is a local ring with involution, where we know the result locally holds. Now, the proof of Theorem~\ref{thm:adeles:H} follows the general outline that is present in the proofs of Theorems~\ref{thm:adeles:F} and \ref{thm:adeles:quad}. However, care needs to be taken when inspecting ${\rm M}(n,\mathcal{Q}_v)$ at finite split places.

Before we continue, we have two lemmas that we write in a slightly more general setting. Let $R$ be a ring with identity where $2 = 0$. In this setting, we have that
\[ J = \left( \begin{array}{rr} \ 0 & 1 \\ 1& 0 \end{array} \right) = J^{-1}, \]
and we take the block diagonal matrix
\[ B = \left( \begin{array}{ccc} \ J & \cdots & 0 \\ \vdots &  & \vdots \\ 0 & \cdots & J \end{array} \right) = B^{-1}. \]
Consider the matrix ring ${\rm M}(2n,R)$, with involution given by
\begin{equation}\label{M(n):inv}
   a^* = Ba^tB.
\end{equation}
We first address the case when $R=F$ is a field.

\begin{lemma}\label{lem:char2:split}
Let $F$ be a field of characteristic $2$ and let $\mathbb{H} = {\rm M}(2,F)$ be the split quaternions. Form the matrix ring $A = {\rm M}(n,\mathbb{H})$ with the involution induced from that of $\mathbb{H}$. Then $A$ is $*$-Euclidean of length $1$.
\end{lemma}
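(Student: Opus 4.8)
The plan is to identify $A = {\rm M}(n,\mathbb{H}) = {\rm M}(n,{\rm M}(2,F))$ with ${\rm M}(2n,F)$ and to check that, under this identification, the involution $a^* = (a_{ji}^*)$ on $A$ coming from \eqref{nxn:invo} together with $h^* = JhJ^{-1}$ on $\mathbb{H}$ is exactly the involution $a \mapsto Ba^tB$ on ${\rm M}(2n,F)$ with $B$ the block-diagonal matrix with $n$ copies of $J$ on the diagonal. Since ${\rm char}(F) = 2$ forces $J = J^{-1}$ and $B = B^{-1}$, this is a straightforward bookkeeping step: the $(i,j)$ block being $J a_{ji}^t J$ is precisely the $(i,j)$ block of $B a^t B$. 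Having made this identification, the goal reduces to establishing $*$-Euclideanity of length $1$ for the concrete matrix ring $({\rm M}(2n,F), a \mapsto Ba^tB)$, $F$ a field of characteristic $2$.

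Next I would set up the $*$-Euclidean problem directly: given $a, c \in {\rm M}(2n,F)$ with ${\rm M}(2n,F)a + {\rm M}(2n,F)c = {\rm M}(2n,F)$ and $a^*c \in A^{\rm sym}$, I must produce $s \in A^{\rm sym}$ and $r \in A^\times$ with $a = sc + r$. As in the proof of Lemma~\ref{2:div:rings}, any such $s, r$ automatically satisfy ${\rm M}(2n,F)c + {\rm M}(2n,F)r = {\rm M}(2n,F)$ — so the real content is choosing $s$ symmetric making $r = a - sc = a + sc$ invertible. The coprimality hypothesis gives ${\rm rank}(a) + {\rm rank}(c) \geq 2n$, and by multiplying $a$ and $c$ on the \emph{right} by a common unit (which preserves invertibility of $r = a + sc$ after the same right multiplication, and preserves the rank/coprimality data) one can put $c$ in a normal form; the freedom to choose $s$ in $A^{\rm sym}$ is the freedom to add $sc$ to $a$. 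The key linear-algebra fact in characteristic $2$ is that $B a^t B$ ranges over a large enough space of symmetric matrices that $\{ sc : s \in A^{\rm sym}\}$ can correct $a$ on the complement of the column space of $c$; concretely, after reducing $c$ to a projection-like form, one writes $a + sc$ in block form and chooses the blocks of $s$ (respecting the symmetry constraint $s = Bs^tB$) so that the diagonal blocks become invertible, exactly mirroring the $n=1$ argument of Lemma~\ref{H:char2} but carried out $2n \times 2n$.

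The main obstacle I anticipate is the interplay between the symmetry constraint on $s$ and the need to hit the right blocks: over ${\rm M}(2,F)$ itself (Lemma~\ref{H:char2}) the three special shapes \eqref{eq:sc:triang} sufficed, but in the $2n$-block setting one must check that the analogous block shapes — block-triangular and block-anti-triangular matrices $s$ with $s = Bs^tB$ — still provide enough room, and that the degenerate small-field case $F = \F_2$ (flagged in Lemma~\ref{H:char2}) does not obstruct the construction. I would handle this by a rank argument: reduce $c$ via right unit multiplication to the form where its nonzero columns are a coordinate subspace, split $a$ accordingly, and show the symmetric-$s$ correction is governed by a surjective linear map onto the relevant block of matrices, so that invertibility of $r$ can always be arranged, with the decomposition length being exactly $1$. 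Finally, translating back through the identification $A \cong {\rm M}(2n,F)$ completes the proof.
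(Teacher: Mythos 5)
Your opening step---identifying $A$ with ${\rm M}(2n,F)$ and checking that the induced involution is $a \mapsto Ba^tB$ as in \eqref{M(n):inv}, using ${\rm char}(F)=2$ to get $J=J^{-1}$ and $B=B^{-1}$---is exactly how the paper's proof begins. After that the routes diverge, and your route has a genuine gap at its central step. The paper observes that $a \mapsto Ba^tB$ is precisely the adjoint involution of the non-degenerate bilinear pairing $\langle v,w\rangle = v^tBw$ on $F^{2n}$, and then invokes the Transversality and coprime Lemma of Pantoja and Soto-Andrade (Proposition~3.3 of \cite{PaSA2003}), which for a matrix ring over a field with a form-adjoint involution produces $s\in A^{\rm sym}$ with $a+sc\in A^\times$ whenever $Aa+Ac=A$ and $a^*c=c^*a$; once the adjoint structure is recognized, the lemma is a citation. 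You instead propose to prove this coprimality statement from scratch by a block-by-block generalization of Lemma~\ref{H:char2}. That is not wrong in principle---it is essentially how the coprime lemma is proved---but the assertion you lean on, that $\{sc : s\in A^{\rm sym}\}$ ``can correct $a$ on the complement of the column space of $c$'' via a surjective linear map onto the relevant block, is exactly the nontrivial content and is nowhere established. Note also that your toolkit is more limited than in Lemma~\ref{2:div:rings}: there the two components $a_1,c_1$ could be row-reduced by \emph{independent} left units $e,f$, whereas here only a \emph{common right} unit is available (left multiplication would have to be absorbed into $s$ and destroys the symmetry constraint $s=Bs^tB$), so ``reduce $c$ to a projection-like form and then fix the diagonal blocks of $s$'' does not reduce to a sequence of $2\times 2$ instances of \eqref{eq:sc:triang}; the symmetry constraint couples the $(i,j)$ and $(j,i)$ blocks of $s$ globally, and reconciling this with the normal form of $c$ is the whole difficulty (this is what the transversality argument of \cite{PaSA2003} handles, by producing a totally isotropic complement).

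To close the gap you should either carry out that rank/transversality argument in full---i.e.\ reprove Proposition~3.3 of \cite{PaSA2003} for the pairing $v^tBw$---or, far more economically, make the one observation your plan misses: in characteristic $2$ the matrix $B$ is symmetric and $B=B^{-1}$, so $a\mapsto Ba^tB$ is the adjoint involution of a non-degenerate (indeed alternating) bilinear form, and the already-established coprime lemma applies verbatim, giving $*$-Euclideanity of length $1$.
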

\begin{proof}
Consider $A$ as the ring of $2n\times 2n$ matrices with entries in $F$, whose involution is given by \eqref{M(n):inv}. We then have a non-degenerate bilinear pairing
\[ \left\langle \cdot\,, \cdot \right\rangle : F^{2n} \times F^{2n} \rightarrow F, \quad
   \left\langle v, w \right\rangle = v^t B w, \]
where $v$, $w$ are seen as column vectors of $F^{2n}$. The involution $a \mapsto a^*$, is precisely the adjoint for the pairing $\left\langle \cdot\,, \cdot \right\rangle$, to which we can apply Transversality and the coprime Lemma of Pantoja and Soto-Andrade, in particular, Proposition 3.3 of \cite{PaSA2003}. In this way, given $a$, $c \in A$, such that
\[ A a + A c = A, \quad a^* c = c^* a, \]
we obtain an $s \in A^{\rm sym}$ such that
\[ r = a + sc \in A^\times. \]
\end{proof}

We now require to extend the above to the case of a local ring.

\begin{lemma}\label{lem:char2:split:localring}
Let $R$ be a local ring of characteristic $2$ with maximal prime ideal $\mfp$. Let $Q = {\rm M}(2,R)$, so that $(Q,*) \in \mathcal{A}_R$ with the involution obtained from \eqref{eq:H:split:inv}. Form the matrix ring $A = {\rm M}(n,Q)$ with the involution induced from that of $Q$. Then $A$ is $*$-Euclidean of length $1$.
\end{lemma}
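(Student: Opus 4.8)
The plan is to run the reduction-modulo-radical strategy of Lemma~\ref{lem:local:A/J:A}, so as to land in the situation already settled by Lemma~\ref{lem:char2:split}. Write $A = {\rm M}(n,Q) = {\rm M}(2n,R)$, where the involution, obtained from \eqref{eq:H:split:inv} and \eqref{nxn:invo}, is $a^* = B a^t B$ as in \eqref{M(n):inv}, with $B$ the block-diagonal matrix of copies of $J = J^{-1}$ (recall ${\rm char}(R) = 2$). The Jacobson radical of $A$ is $\J = {\rm M}(2n,\mfp) = \mfp A$, and it is stable under $*$ since $B$ has entries in $\{0,1\}$. The quotient $\overline{A} = A/\J$ is then ${\rm M}(2n, R/\mfp)$ equipped with the induced involution $\bar a \mapsto B\bar a^t B$, where $R/\mfp$ is a (finite, in the applications) field of characteristic $2$ — or a division ring in general, the argument of Lemma~\ref{lem:char2:split} applying equally via Proposition~3.3 of \cite{PaSA2003}. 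Thus Lemma~\ref{lem:char2:split} gives that $\overline{A}$ is $*$-Euclidean of length $1$.

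It remains to lift the one-step $*$-Euclidean datum from $\overline{A}$ to $A$, and here some care is warranted: the ring $Q = {\rm M}(2,R)$ is \emph{not} a $*$-local ring in the sense of \S~\ref{q-local:inv} — it carries left maximal ideals that are not two-sided — so Lemma~\ref{proj:sec:q-loc:lem} cannot be quoted verbatim. Instead I would build the required section $\sigma \colon \overline{A} \to A$ by hand: fix a set-theoretic section $s \colon R/\mfp \to R$ with $s(0) = 0$, $s(1) = 1$, and let $\sigma$ act entrywise. The decisive observation is that, because ${\rm char}(R) = 2$, the involution \eqref{M(n):inv} acts on matrix entries merely by a fixed permutation followed by transposition, namely $(a^*)_{ij} = a_{\rho(j)\,\rho(i)}$ where $\rho$ is the involution of $\{1,\dots,2n\}$ interchanging $2k-1$ and $2k$. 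Hence the entrywise $\sigma$ commutes with $*$ and preserves symmetry; and since $A$ is a matrix ring over the local ring $R$ — in particular semilocal, hence Dedekind finite — an element of $A$ is a unit if and only if its image in $\overline{A}$ is (using $1 + \J \subseteq A^\times$, Bass' Lemma~6.4 of \cite{Ba1964}), so $\sigma$ is compatible with invertibility as well.

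With the section in place the argument finishes exactly as in Lemma~\ref{lem:local:A/J:A}: given $a,c \in A$ with $Aa + Ac = A$ and $a^*c \in A^{\rm sym}$ — equivalently $a^*c = c^*a$ in characteristic $2$ — reduction mod $\J$ yields $\overline{A}\bar a + \overline{A}\bar c = \overline{A}$ and $\bar a^*\bar c \in \overline{A}^{\rm sym}$, so Lemma~\ref{lem:char2:split} produces $\bar s \in \overline{A}^{\rm sym}$ with $\bar r := \bar a + \bar s\,\bar c \in \overline{A}^\times$; setting $s := \sigma(\bar s) \in A^{\rm sym}$ and $r := a + sc$, one gets $r \equiv \bar r \pmod{\J}$ a unit modulo the Jacobson radical, hence $r \in A^\times$, and $a = sc + r$ exhibits the $*$-Euclidean property with decomposition length $1$. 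I expect the only genuinely delicate point to be the verification that the entrywise section is simultaneously compatible with the involution, with symmetry, and with invertibility in this non-$*$-local setting — it is precisely the characteristic-$2$ description of \eqref{M(n):inv} on entries that makes all three hold at once.
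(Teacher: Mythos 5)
Your proof is correct and follows essentially the same route as the paper: identify $A$ with ${\rm M}(2n,R)$ carrying the involution \eqref{M(n):inv}, reduce modulo the Jacobson radical $\J = {\rm M}(2n,\mfp)$, apply Lemma~\ref{lem:char2:split} to $\overline{A}$, and lift the one-step datum back via a symmetric section together with the fact that units lift modulo $\J$. Your explicit entrywise construction of the section, justified by the observation that in characteristic $2$ the involution $a \mapsto Ba^tB$ merely permutes matrix entries, is exactly the "extension to this setting" of the \S~\ref{q:loc:mod:p} machinery that the paper invokes more briefly.
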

\begin{proof}
We can identify $A$ with ${\rm M}(2n,R)$, where the involution is given by \eqref{M(n):inv}. We next follow our general construction of \S~\ref{q:loc:mod:p} for reduction modulo the Jacobson radical $\J$, $A \xrightarrow{\ \pi \ } \overline{A} = A/\mathcal{J}$, which produces a section map $\overline{A} \xrightarrow{\ \sigma \ } A$. More specifically, equations \eqref{A:decomp} and \eqref{decomp:sym} lead towards Lemma~\ref{proj:sec:q-loc:lem} where $\pi$ and $\sigma$ preserve the involutions on $A$ and $\overline{A}$, and the units satisfy
\[ a \in A^\times \ \Longleftrightarrow \ 
   \pi(a) \in \overline{A}^\times \ \Longleftrightarrow \
   a_\sigma \in A^\times. 
\]
Lemma~\ref{lem:local:A/J:A}, extended to this setting, shows that if $\overline{A}$ is $*$-Euclidean the so is $A$. After reducing mod $\mfp$, we apply Lemma~\ref{lem:char2:split}.
\end{proof}

We now continue the proof of Theorem~\ref{thm:adeles:H}. At every split place of $F$, we have a non-Archimedean local field $F_v$ and split quaternions $\mathbb{H}_v={\rm M}(2,F_v)$. We also have a ring of integers $\mathcal{O}_v$ and ring of quaternions $\mathcal{Q}_v={\rm M}(2,\mathcal{O}_v)$.

When ${\rm char}(F)=2$, thanks to Lemma~\ref{lem:char2:split:localring}, we now have $*$-Euclideanity for the ring ${\rm M}(n,\mathcal{Q}_v)$. However, when ${\rm char}(F) \neq 2$, we use Lemma~\ref{lem:n2:n*E}, which gives an immediate negative answer to the case $n=1$ at every split non-Archimedean place; a result that directly extends to the case of $n>1$. This concludes the proof of Theorem~\ref{thm:adeles:H}.

\section{Examples and Bruhat generators}

We begin with the Dieudonn\'e determinant and $*$-local rings, observing a connection between ${\rm SL}_*$ groups and the general linear group. We also provide two non-trivial examples for the theory, namely, unitary groups arising as ${\rm SL}_*$ groups and ${\rm SL}_*$ over local and ad\`elic quaternions.

The $*$-Euclidean property is a strong one for a ring $A$, and leads to the important result of Pantoja and Soto-Andrade that ${\rm SL}_*(2,A)$ has a set of Bruhat generators \cite{PaSA2009}. We thus establish Bruhat generation over local and ad\`elic rings, to conclude with Theorem~\ref{thm:loc:glob:Bruhat}.

\subsection{${\rm GL}(n)$ and $*$-local rings}

Given a division ring $D$, the Dieudonn\'e determinant \cite{Ar1957}, gives a criterion for the invertibility of $n \times n$ matrices with entries in $D$:
\[ x \in {\rm GL}(n,D) \ \Longleftrightarrow \ x \in {\rm M}(n,D) \text{ and } \det(x) \neq 0. \]

Now, let $R$ be a $*$-local ring, with ${\rm MSpec}(R) = \{ \mfp, \mfp^*\}$. From \S~\ref{q:loc:mod:p}, we have a projection map from $R$ to $R/\!\mfp$ and, depending on $\mfp = \mfp^*$ or $\mfp \neq \mfp^*$, also a projection from $R$ to $\,R/\!\mfp^*$. This leads to the following map of matrix rings
\[ {\rm M}(n,R) \longrightarrow {\rm M}(n,R/\!\mfp) \oplus {\rm M}(n,R/\!\mfp^*), \]
where we write
\[ a \mapsto (\bar{a},\tilde{a}). \]

Over the $*$-local ring $R$, the general linear group consists of invertible $n \times n$ matrices ${\rm GL}(n,R)$, with its principal congruence subgroup
\[ K = \{  a \in M(n,R) \mid \pi(a) = I_n + \J \}, \]
where $\pi : R \rightarrow R/\mathcal{J}$ is the canonical projection map. We now observe that invertibility is compatible with the Dieudonn\'e determinant and reduction mod $\J$.

\begin{lemma}\label{lem:q:local:Dieu}
Let $R$ be a $*$-local ring. The following are equivalent for $a \in {\rm M}(n,R)$:
\begin{itemize}
   \item[(i)] $a \in {\rm GL}(n,R)$.
   \item[(ii)] There exists $b \in {\rm M}(n,R)$ such that $ba \in K$.
   \item[(iii)] $\det(\bar{a}) \neq 0$ and $\det(\tilde{a}) \neq 0$.
\end{itemize}
\end{lemma}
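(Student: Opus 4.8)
The plan is to prove the cyclic chain of implications (i) $\Rightarrow$ (ii) $\Rightarrow$ (iii) $\Rightarrow$ (i), using the reduction-mod-$\mathcal{J}$ machinery of \S~\ref{q:loc:mod:p} together with the classical Dieudonn\'e criterion over a division ring. Recall that $\overline{A} = A/\mathcal{J} \cong \overline{A}_{\mfp}$ sits inside $\mathrm{M}(n,R/\mfp) \oplus \mathrm{M}(n,R/\mfp^*)$ as the diagonal-type subring $\overline{R}_{\mfp}$-matrices, and that the map $a \mapsto (\bar a,\tilde a)$ is exactly $\pi$ followed by this embedding; both $D_1 = R/\mfp$ and $D_2 = R/\mfp^*$ are division rings by Lemma~\ref{one:two:qloc}.

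For (i) $\Rightarrow$ (ii): if $a \in \mathrm{GL}(n,R)$, simply take $b = a^{-1}$, so that $ba = I_n$, which visibly lies in $K$ since $\pi(I_n) = I_n + \mathcal{J}$. For (ii) $\Rightarrow$ (iii): suppose $ba \in K$, i.e. $\pi(ba) = I_n + \mathcal{J}$; applying the ring homomorphisms $a \mapsto \bar a$ and $a \mapsto \tilde a$ gives $\bar b\,\bar a = I_n$ in $\mathrm{M}(n,D_1)$ and $\tilde b\,\tilde a = I_n$ in $\mathrm{M}(n,D_2)$. Since $\mathrm{M}(n,D_i)$ is Dedekind finite (it is a simple Artinian ring, or one invokes the Dieudonn\'e determinant multiplicativity directly), $\bar a$ and $\tilde a$ are units in their respective matrix rings, hence $\det(\bar a) \neq 0$ and $\det(\tilde a) \neq 0$ by the Dieudonn\'e criterion. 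For (iii) $\Rightarrow$ (i): if $\det(\bar a) \neq 0$ and $\det(\tilde a) \neq 0$, then $\bar a \in \mathrm{GL}(n,D_1)$ and $\tilde a \in \mathrm{GL}(n,D_2)$, so $(\bar a,\tilde a) \in \overline{A}_{\mfp}^\times$, i.e. $\pi(a) \in \overline{A}^\times$. By Property~(ii) of Lemma~\ref{proj:sec:q-loc:lem} (invertibility lifts from $\overline{A}$ to $A$, using Bass' lemma that $1 + x$ is a unit for $x \in \mathcal{J}$), we conclude $a \in A^\times = \mathrm{GL}(n,R)$.

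I do not expect a serious obstacle here; the content is entirely the bookkeeping of matching up the non-canonical projection $\pi$ with the two component projections and then quoting the already-established lifting lemma. The one point that deserves a sentence of care is the identification in (iii): one must note that a pair $(\det(\bar a),\det(\tilde a))$ with both entries nonzero corresponds, via the Chinese Remainder Theorem identification $R/\mathcal{J} \cong R/\mfp \oplus R/\mfp^*$ used in \S~\ref{q:loc:mod:p}, to an invertible element of $\overline{R}_{\mfp}$, so that $\pi(a)$ is genuinely a unit of $\overline{A}$ and not merely invertible after further projection; in the $1$-local case $\mfp = \mfp^*$ the two conditions coincide and this is automatic.
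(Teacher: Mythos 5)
Your proof is correct and uses exactly the ingredients the paper's (one-line) proof points to: the lifting of invertibility through reduction mod $\mathcal{J}$ from Lemma~\ref{proj:sec:q-loc:lem}, and Dedekind finiteness (which you apply to ${\rm M}(n,D_i)$ at the residue level rather than to ${\rm M}(n,R)$ itself, an immaterial variation). Your closing remark on the Chinese Remainder identification of $\overline{A}$ with the full componentwise direct sum in the $2$-local case is precisely the right point to flag, and the $1$-local degenerate case is handled correctly.
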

\begin{proof}
This follows from Lemma~\ref{proj:sec:q-loc:lem}; notice that ${\rm M}(n,R)$ is Dedekind finite.
\end{proof}

A basic and important example of a $2$-local ring with involution is $S = R_1 \times R_2$, obtained from local rings $R_i$ that are linked by an anti-automorphism $\varphi: R_1 \rightarrow R_2$, equipped with the $\varphi$-flip involution
\[ r^* = (\varphi^{-1}(r_2),\varphi(r_1)) \text{ for } r = (r_1,r_2) \in S. \]
Form the matrix ring
\[ A = {\rm M}(n,S) = A_1 \oplus A_2, \quad A_i = {\rm M}(n,R_i), \]
with the $\varphi$-flip transpose involution
\[ a^* = (\varphi^{-1}(a_2)^t,\varphi(a_1)^t) \text{ for } a = (a_1,a_2) \in A, \]
so that $(A,*) \in \mathcal{A}_R$. We recall that Proposition~5.1 of \cite{PaSA2003}, gives an isomorphism
\begin{equation}\label{eq:sl:gl:2loc}
   {\rm SL}_*(2,A) =
   \{ a = (a_1,J(\varphi(a_1)^{-1})^tJ^{-1}) \in A \mid a_1 \in 
   {\rm GL}(2,A_1) \}
   \simeq {\rm GL}(2,A_1).
\end{equation}

\subsection{Unitary groups}\label{unitary:SL*}
Classically, one works over a base field $F$, where we have the unitary group  ${\rm U}_{2n}(F)$ associated to a separable quadratic algebra $E$ over $F$ with involution $\alpha: E \rightarrow E$, $x \mapsto \bar{x}$, given by the non-trivial Galois element $\alpha \in {\rm Gal}(E/F)$ if $E/F$ is a field extension, and is obtained from the flip involution $(x,y) \mapsto (y,x)$ if $E \simeq F \times F$. Let
\[ \Phi_n = \begin{pmatrix} \ 0 & J_n \\ -J_n & 0 \end{pmatrix}, \]
where  $J_n$ is the $n \times n$ matrix $J_n = \left( \delta_{i,n-j+1} \right)$, where $\delta_{i,j}$ denotes Kronecker's delta function, and define the hermitian form $h_n$ on the $2n$-dimensional vector space $V$ of column vectors with entries in $E$, defined by
\[ h_n(x,y) = \bar{x}^t \, \Phi_n y, \quad x, y \in V. \]
Then ${\rm U}_{2n}(F)$ is the group of isometries of $h_n$.

There is a more general setting for defining unitary groups by  considering $(A,*) \in \mathcal{A}_R$ and let $\varepsilon=\pm 1$. Then,  a $\varepsilon$-hermitian form  over a free left $A$-module $V$  of finite rank is a bi-additive map $h:V\times V\to A$ , linear in the second variable and satisfies
\[
h(v,u)^*=\varepsilon h(u,v),\quad u,v\in V.
\]
The unitary group ${\rm U}(h)$ associated to $h$  consists then  of all $g\in{\rm GL}(V)$ such that 
\[ 
h(gu,gv) = h(u,v), \quad   u, v \in V. 
\]

One of our examples arises by taking $A = {\rm M}(n,\mathbb{A}_E)$, where we have the connection between unitary groups and ${\rm SL}_*$ groups,
\[ {\rm U}_{2n}(\mathbb{A}_F) = {\rm SL}_*(2,\mathbb{A}_E). \]
We  thus may observe that, at every split place $v$ of $F$, we further have a connection between ${\rm SL}_*$ groups and general linear groups. Indeed, in this case we can fix $E_v = F_v \times F_v$ and take the map $\varphi$ of \eqref{eq:sl:gl:2loc} to be the identity. More precisely, ${\rm SL}_*(2,E_v)$ is isomorphic to ${\rm GL}(n,F_v)$ via the projection map
\[
(g_0,J(g_0^{-1})^tJ^{-1})\mapsto g_0.
\]

\subsection{Bruhat Generators}\label{bruhat:gen}
We have the Bruhat elements of ${\rm SL}_*(2,A)$, which are the natural extension of those for ${\rm SL}_2(F)$ when $F$ is  a field. Namely, the matrices
\[
   h_a = \left( \begin{array}{l} a^* \,\ 0 \\ 0 \ \,\ a^{-1} \end{array} \!\!\!\right),
   (a\in A^{\times}),\quad
   u_b = \begin{pmatrix} 1&b\\0&1  \end{pmatrix},  (b \in A^{\rm sym}), \quad
   \text{ and } \quad
   \omega = \left( \begin{array}{rr} 0&1\\ -1&0 \end{array} \right)
\]
are the Bruhat elements for ${\rm SL}_*(2,A)$.

We observe a formal Bruhat relation, valid when one of the entries is a unit. In particular, if

\[ g = \left(\begin{matrix} a & b \\c & d \end{matrix}\right) \in {\rm SL}_*(2,A), \]
and $a \in A^\times$, then
\begin{equation}\label{Bruhat:ind:eq}
   g = w^{-1} h_{a}^{-1} \, u_{-a^*c} \, w \, u_{a^{-1}b}.
\end{equation}
Note that if $b$, $c$ or $d$ is in $A^\times$, then multipliying $g$ on the left or right by $w$, leads to a matrix in the previous situation. Notice that each one of the elements appearing in \eqref{Bruhat:ind:eq} is indeed inside ${\rm SL}_*(2,A)$, follows from the defining relations given by \eqref{sl*:relations}. For example,
\[ ab^* = ba^* \ \Longrightarrow \ a^{-1}b = b^*(a^*)^{-1} \in A^{\rm sym}. \]
Hence $u_{a^{-1}b} \in {\rm SL}_*(2,A)$.

However, an element of $ {\rm SL}_*(2,A)$ does not in general satisfy the property that one of its entries is a unit. When $A = {\rm M}(n,R)$, with $R$ a $*$-local ring, this only happens when $n = 1$, because one can reduce the $*$-determinant relation mod $\mathcal{J}$:
\[ \bar{a}\bar{d}^* - \bar{b}\bar{c}^* = \bar{1}, \]
forcing $\bar{a}\bar{d}^*$ or $\bar{b}\bar{c}^*$ to be a unit. Hence $ad^*$ or $bc^*$ is a unit, and two of the entries are thus units in this case. For $n >1$, we can arrange for a product
\[ u_{b_1} w \,u_{b_2} w^{-1} u_{b_3} \]
to have all of its entries non-invertible, after suitable choices for $b_1, b_2, b_3 \in A^{\rm sym}$.

Now, a very interesting problem is to determine when the matrices $h_t$, $u_s$, $w$ generate the group ${\rm SL}_*(2,A)$. In this sense, there exists an important relation between $*$-Euclidianity and Bruhat elements. More precisely, Pantoja and Soto-Andrade proved that if $A$ is $*$-Euclidean, then $h_a$, $u_b$ and $w$, with $a\in A^{\times}$  and $b\in A^{\rm sym}$, generate ${\rm SL}_*(2,A)$ \cite{PaSA2009}, \S~5 Proposition~4.

Thus, we can apply these observations to the four different settings of Theorems \ref{thm:q-local}, \ref{thm:adeles:F} and \ref{thm:adeles:quad}, from which the next result is a corollary; we elaborate on ${\rm SL}_*$ groups over the quaternions in the following subsection.

\begin{theorem}\label{thm:loc:glob:Bruhat}
Consider involutive rings $(R,*)$ in the following cases:
\begin{itemize}
   \item[(i)] $R$ is a $*$-local ring; or
   \item[(ii)] $R$ is the ring of ad\`eles $\mathbb{A}_F$ of a global field $F$, with trivial involution; or
   \item[(iii)] $R = \mathbb{A}_E$, where $E/F$ is a separable quadratic field extension with the involution dictated by the non-trivial Galois automorphism; or
   \item[(iv)] $R = \mathbb{A}_{\mathbb{H}}$, where $\mathbb{H}$ is a quaternion algebra over a global field $F$, ${\rm char}(F) = 2$.
\end{itemize}
Let $A = {\rm M}(n,R)$, so that $(A,*) \in \mathcal{A}_R$ with the involution induced from that of $R$. Then ${\rm SL}_*(2,A)$ is generated by the Bruhat elements $h_a$, $u_b$ and $w$.
\end{theorem}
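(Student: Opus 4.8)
The plan is to deduce Theorem~\ref{thm:loc:glob:Bruhat} as a direct corollary of the $*$-Euclideanity results already established, combined with the Pantoja--Soto-Andrade theorem that a $*$-Euclidean ring $A$ has ${\rm SL}_*(2,A)$ generated by the Bruhat elements $h_a$, $u_b$, $w$ (\cite{PaSA2009}, \S~5 Proposition~4). So the entire argument reduces to checking that in each of the four cases, the matrix ring $A = {\rm M}(n,R)$ is $*$-Euclidean.

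First I would dispose of the cases one at a time. In case (i), $R$ is a $*$-local ring, and Theorem~\ref{thm:q-local} asserts precisely that $A = {\rm M}(n,R)$ is $*$-Euclidean, so we are done immediately. In case (ii), $R = \mathbb{A}_F$ is the ad\`ele ring of a global field with trivial involution, and Theorem~\ref{thm:adeles:F} gives that $A = {\rm M}(n,\mathbb{A}_F)$ is $*$-Euclidean. In case (iii), $R = \mathbb{A}_E$ with $E/F$ a separable quadratic field extension and the involution induced by the nontrivial Galois element, and Theorem~\ref{thm:adeles:quad} supplies $*$-Euclideanity of $A = {\rm M}(n,\mathbb{A}_E)$. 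In case (iv), $R = \mathbb{A}_{\mathbb{H}}$ with $\mathbb{H}$ a quaternion algebra over a global field $F$ of characteristic $2$, and part (i) of Theorem~\ref{thm:adeles:H} gives that $A = {\rm M}(n,\mathbb{A}_{\mathbb{H}})$ is $*$-Euclidean (the hypothesis ${\rm char}(F) = 2$ being essential here, by part (ii) of that theorem).

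Having established $*$-Euclideanity in all four cases, I would then invoke the cited result of Pantoja and Soto-Andrade to conclude that ${\rm SL}_*(2,A)$ is generated by the Bruhat elements $h_a$ ($a \in A^\times$), $u_b$ ($b \in A^{\rm sym}$), and $w$. One small point worth remarking explicitly is that in each case $(A,*) \in \mathcal{A}_R$ with the involution induced from that of $R$, so the ambient hypotheses of \cite{PaSA2009} are met.

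Honestly, I do not expect a genuine obstacle here: the theorem is a packaging result, and all the substantive work has been carried out earlier in \S~\ref{Eu:Br:Ad}. The only things to be careful about are bookkeeping — making sure each case is matched with the correct prior theorem and that the characteristic hypothesis in (iv) is flagged — and ensuring that the statement of the Pantoja--Soto-Andrade generation result is quoted with its exact hypotheses. If anything could be called the ``hard part,'' it is simply the accumulated weight of the preceding sections (Theorems~\ref{thm:q-local}, \ref{thm:adeles:F}, \ref{thm:adeles:quad}, and \ref{thm:adeles:H}), not the corollary itself.
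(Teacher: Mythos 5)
Your proposal is correct, and for cases (i)--(iii) it is exactly the paper's argument: cite Theorems~\ref{thm:q-local}, \ref{thm:adeles:F}, \ref{thm:adeles:quad} for $*$-Euclideanity and then apply \cite{PaSA2009}, \S~5 Proposition~4. The only divergence is in case (iv). You invoke Theorem~\ref{thm:adeles:H}(i) globally and conclude at once, which is legitimate and arguably the cleanest route. The paper instead defers case (iv) to \S~\ref{sl:quat}, where it first observes that Bruhat generation for ${\rm SL}_*(2,A)$ is equivalent to Bruhat generation for ${\rm SL}_*(2,A_v)$ at the Archimedean places and for ${\rm SL}_*(2,Q_v)$ at the non-Archimedean places, and then verifies the local statements place by place via Lemmas~\ref{lem:char2:split} and \ref{lem:char2:split:localring} (split places) and the $*$-local ring case (non-split places). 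The reason the paper takes this longer path is that it is simultaneously proving the stronger ``if and only if'' refinement: the failure of Bruhat generation when ${\rm char}(F)\neq 2$ does not follow from the mere failure of $*$-Euclideanity ($*$-Euclidean is sufficient, not necessary, for Bruhat generation), so the paper needs the local-global equivalence together with the finite-field counterexample of \cite{CrGuSz2020} at a split place. For the theorem as literally stated, your shorter deduction suffices; just be aware that it does not by itself yield the converse recorded at the end of \S~\ref{sl:quat}.
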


When the underlying ring $R$ is local, it was proved in \cite{CrGuSz2020}, where many interesting properties are explored like the Weil representations of these groups. Part (i) of the previous theorem is an extension to the case of $*$-local rings and provides and alternate proof when $R$ is local.

The unitary groups, as in \S~\ref{unitary:SL*}, give an example of case (iii), where the proof of the theorem follows from the results of \S~\ref{Eu:Br:Ad}. And finally, the quaternions provide an interesting non-conmutative example of Theorem~\ref{thm:loc:glob:Bruhat} for case (i) in general, and case (iv) when ${\rm char}(F)=2$. We now inspect ${\rm SL}_*$ groups over the quaternions more closely, explore some interesting properties and complete the proof of the theorem.

\subsection{Quaternions}\label{sl:quat} Let $\mathbb{H}$ be a quaternion division algebra over a local or a global field $F$.  We have three related, yet distinct groups in this setting, namely
\[ {\rm SL}(2,F), \ {\rm SL}(2,\mathbb{H}) \text{ and } {\rm SL}_*(2,\mathbb{H}). \]
The first two are the special linear groups of $F$ and $\mathbb{H}$, respectively, where ${\rm SL}(2,\mathbb{H})$ is given by the kernel of the Dieudonn\'e determinant. The third is the ${\rm SL}_*$ group of Pantoja and Soto-Andrade over the division algebra $\mathbb{H}$ endowed with the quaternionic involution.

\medskip

\noindent{\bf Case of a $\mfp$-adic field}. Let $F$ be a non-Archimedean local field $F$, with ring of integers $\mathcal{O}$ and maximal ideal $\mfp$. We have a finite residue field $k_F = \mathcal{O}/\mfp$.

In this setting, $\mathbb{H}$ is the unique (up to isomorphism) $4$ dimensional central division algebra over $F$. As in \S~\ref{quat}, we let $\mathcal{Q}$ denote the ring of integers of $\mathbb{H}$, and $\mathfrak{q}$ its maximal ideal. We obtain a finite quotient field $k_{\mathbb{H}} = \mathcal{Q}/\mathfrak{q}$.

Consider the ring of matrices $A = {\rm M}(n,\mathcal{Q})$ with the involution induced from $(\mathcal{Q},*) \in \mathcal{A}_{\mathcal{O}}$. Then $A$ is $*$-Euclidean by Theorem~\ref{thm:q-local}. Hence, by \cite{PaSA2009} we know that ${\rm SL}_*(2,A)$ is generated by the Bruhat elements of \S~\ref{bruhat:gen} and Theorem~\ref{thm:loc:glob:Bruhat} is thus valid for the quaternionic ring $\mathcal{Q}$. In particular, ${\rm SL}_*(2,\mathbb{H})$ is generated by the Bruhat elements of \S~\ref{bruhat:gen}.

The special linear groups ${\rm SL}(2,F)$ and ${\rm SL}(2,\mathbb{H})$ are generated by its Bruhat elements, see for example \cite{Ar1957}. We further observe that we have group homomorphisms obtained from reduction modulo the corresponding maximal ideal, namely
\begin{align*}
   \varphi_F : {\rm SL}(2,\mathcal{O}) \rightarrow {\rm SL}(2,k_{F})
\end{align*}
and
\begin{align*}
   \varphi_{\mathbb{H}}: {\rm SL}(2,\mathcal{Q}) \rightarrow {\rm SL}(2,k_{\mathbb{H}}).
\end{align*}
The Borel subgroups, $B(k_F)$ of ${\rm SL}(2,k_F)$ and $B(k_{\mathbb{H}})$ of ${\rm SL}(2,k_{\mathbb{H}})$, consist of the respective upper triangular matrices. These lead towards the Iwahori subgroups, defined as the pre-images of the above maps:
\[ \mathcal{I}_F =  \varphi_F^{-1}(B(k_F)) \text{ and } \mathcal{I}_{\mathbb{H}} =  \varphi_{\mathbb{H}}^{-1}(B(k_{\mathbb{H}})). \]

Now, consider the subgroups of diagonal elements,
\[ D_F = \left\{ h_a \mid a \in F^\times \right\} \text{ and } 
   D_{\mathbb{H}} = \left\{ h_a \mid a \in \mathbb{H}^\times \right\}. \]
It then follows from the definitions that we have the following relation
\begin{equation}\label{eq:sl*:sl}
   {\rm SL}_*(2,\mathbb{H}) = D_{\mathbb{H}} \cdot {\rm SL}(2,F).
\end {equation}

It is interesting to note a result of Ihara, which gives an expression of the special linear group as an amalgamated product, valid over $F$,
\begin{equation}\label{amalgama:F}
   {\rm SL}(2,F) = {\rm SL}(2,\mathcal{O}) *_{\mathcal{I}_F} {\rm SL}(2,\mathcal{O}),
\end{equation}
and over $\mathbb{H}$,
\begin{equation}\label{amalgama:H}
   {\rm SL}(2,\mathbb{H}) = {\rm SL}(2,\mathcal{Q}) *_{\mathcal{I}_{\mathbb{H}}} {\rm SL}(2,\mathcal{Q}).
\end{equation}
Where a proof of equations \eqref{amalgama:F} and \eqref{amalgama:H} can be found in \cite{Se}. Also note that equation~\eqref{eq:sl*:sl} can also be written as an amalgamaded product,
\begin{equation}
   {\rm SL}_*(2,\mathbb{H}) = D_{\mathbb{H}} *_{D_F} {\rm SL}(2,F).
\end {equation}

\medskip

\noindent{\bf Case of a global field.} Let $F$ be either a number field or a global function field of characteristic $p$. At each place $v$ of $F$, we obtain a quaternion algebra $\mathbb{H}_v$ over the local field $F_v$; $\mathbb{H}_v$ may be split or non-split. At a non-Archimedean place $v$ of $F$, we have the ring of integers $\mathcal{O}_v$ of $F_v$, in addition to the quaternionic ring $\mathcal{Q}_v$ of $\mathbb{H}_v$. At these places, we let $Q_v = {\rm M}(n,\mathcal{Q}_v) \in \mathcal{A}_{\mathcal{Q}_v}$.

Let $A = {\rm M}(n,\mathbb{A}_{\mathbb{H}})$, and at every place $v$ of $F$ let $A_v = {\rm M}(n,\mathbb{H}_v)$. The matrix ring $A$ (resp. $A_v$ at each $v$) is equipped with the involution induced from that of $\mathbb{A}_{\mathbb{H}}$ (resp. $\mathbb{H}_v$). A straightforward inspection tells us that we have Bruhat generation for the group ${\rm SL}_*(2,A)$ if and only if we have Bruhat generation for ${\rm SL}_*(2,A_v)$ at every Archimedean place and for ${\rm SL}_*(2,Q_v)$ at every non-Archimedean place.

The quaternionic ring $\mathbb{H}$ over the global field $F$ forms a central division algebra over $F$. There is at least one place $v$ of $F$ where $\mathbb{H}_v$ is a division algebra, hence non-split; and, $\mathbb{H}_v$ can be non-split at only finitely many $v$ \cite{WeilNT}.

Fix a split place $v$ of $F$, where we have that $\mathbb{H}_v$ and $\mathcal{Q}_v$ are matrix algebras over $F_v$ and $\mathcal{O}_v$, respectively. The involutions on $\mathbb{H}_v$ and $\mathcal{Q}_v$ come from \eqref{eq:H:split:inv}. These, in turn, induce the involutions on $A_v$ and $Q_v$, respectively. When ${\rm char}(F)=2$, in Lemma~\ref{lem:char2:split:localring} we showed that $Q_v$ is $*$-Euclidean of length $1$. And, Lemma~\ref{lem:char2:split} says that $A_v$ is also $*$-Euclidean of length $1$. By \cite{PaSA2009}, \S~5 Proposition~4, ${\rm SL}_*(2,Q_v)$ is in this case generated by its Bruhat elements at every non-Archimedean place and so is ${\rm SL}_*(2,A_v)$. 

However, continuing with the case of a split place $v$, if ${\rm char}(F)\neq 2$, then it follows directly from Lemma~\ref{lem:n2:n*E} that $Q_v$ is not $*$-Euclidean. Furthermore, from \cite{CrGuSz2020} Example 13.3 for a finite field, we 
deduce  that ${\rm SL}_*(2,Q_v)$ is not generated by its Bruhat elements.

At a non-split place, we have either a division algebra $\mathbb{H}_v$ or a local ring with involution $\mathcal{Q}_v$. Hence, we know that ${\rm SL}_*(2,A_v)$ and ${\rm SL}_*(2,Q_v)$ are generated by its Bruhat elements at every non-split place.

Finally, we observe that only in the case of a global function field $F$ with ${\rm char}(F)=2$, do we have that $A_v$ is generated by its Bruhat elements at every place and so is $Q_v$ at all non-Archimedean places. This concludes the proof of Theorem~\ref{thm:loc:glob:Bruhat} in the remaining case. In particular, we have the following:
\begin{quote}
\emph{Let $\mathbb{H}$ be a quaternion algebra over a global field $F$ and let $\mathbb{A}_{\mathbb{H}}$ be its ring of ad\`eles. Let $A = {\rm M}(n,\mathbb{A}_{\mathbb{H}})$ be endowed with the involution induced from that of $\mathbb{A}_{\mathbb{H}}$. Then ${\rm SL}_*(2,A)$ is generated by its Bruhat elements if and only if ${\rm char}(F) = 2$.}
\end{quote}

\newpage

\bigskip\bigskip

\noindent{\sc \Small Luis Guti\'errez Frez, Instituto de Ciencias F\'isicas y Matem\'aticas, Universidad Austral de Chile, Campus Isla Teja SN, Valdivia, Chile.}

\emph{\Small E-mail address: }\texttt{\Small luis.gutierrez@uach.cl}

\medskip

\noindent{\sc \Small Luis Lomel\'i, Instituto de Matem\'aticas, Pontificia Universidad Cat\'olica de Valpara\'iso, Blanco Viel 596, Cerro Bar\'on, Valpara\'iso, Chile.}

\emph{\Small E-mail address: }\texttt{\Small luis.lomeli@pucv.cl}

\medskip

\noindent{\sc \Small Jos\'e Pantoja, Instituto de Matem\'aticas, Pontificia Universidad Cat\'olica de Valpara\'iso, Blanco Viel 596, Cerro Bar\'on, Valpara\'iso, Chile.}

\emph{\Small E-mail address: }\texttt{\Small jose.pantoja@pucv.cl}

\end{document}